\documentclass[reqno]{amsart}
\numberwithin{equation}{section}

\usepackage{bbm}
\usepackage{comment}
\usepackage{verbatim}
\usepackage{mathrsfs}
\usepackage{mathtools}
\usepackage{latexsym}
\usepackage{epsfig}
\usepackage{amsmath}
\usepackage[usenames,dvipsnames]{xcolor}
\usepackage{graphics}
\usepackage[utf8]{inputenc}
\usepackage{amssymb}
\usepackage{amsthm}
\usepackage[alphabetic]{amsrefs}
\usepackage{amsopn}
\usepackage{amscd}
\usepackage[all,knot]{xy}
\xyoption{all}
\usepackage{rotating}
\usepackage{tikz}
\usepackage{hyperref}
\usepackage{tikz}
\usepackage{tikz-cd}

\theoremstyle{plain}
\newtheorem{thm}{Theorem}[section]
\newtheorem{lem}[thm]{Lemma}
\newtheorem{prop}[thm]{Proposition}
\newtheorem{cor}[thm]{Corollary}

\newtheorem*{thm*}{Theorem}
\newtheorem*{lem*}{Lemma}
\newtheorem*{prop*}{Proposition}
\newtheorem*{cor*}{Corollary}

\theoremstyle{definition}

\newtheorem*{defn*}{Definition}

\newtheorem{ex}[thm]{Example}
{}
\newtheorem{rem}[thm]{Remark}
\newtheorem*{rem*}{Remark}

{}
{}
{}
{}
{}
{}
\newtheorem{consequences}[thm]{Consequences}{}

\theoremstyle{remark}
{}
{}
{}

\def\to{\longrightarrow} 

\def\NN{\mathbb{N}}

\def\ZZ{\mathbb{Z}}
\DeclareUnicodeCharacter{221E}{$\infty$}

\def\sfD{\mathsf{D}}

\def\sfK{\mathsf{K}}

\def\mcM{\mathcal{M}}

\def\mcZ{\mathcal{Z}}

\def\mfm{\mathfrak{m}}

\def\mfp{\mathfrak{p}}

\newcommand{\mysetminusD}{\hbox{\tikz{\draw[line width=0.6pt,line cap=round] (3pt,0) -- (0,6pt);}}}
\newcommand{\mysetminusT}{\mysetminusD}
\newcommand{\mysetminusS}{\hbox{\tikz{\draw[line width=0.45pt,line cap=round] (2pt,0) -- (0,4pt);}}}
\newcommand{\mysetminusSS}{\hbox{\tikz{\draw[line width=0.4pt,line cap=round] (1.5pt,0) -- (0,3pt);}}}

\newcommand{\mysetminus}{\mathbin{\mathchoice{\mysetminusD}{\mysetminusT}{\mysetminusS}{\mysetminusSS}}}

\DeclareMathOperator{\Modu}{\mathsf{Mod}}

\DeclareMathOperator{\RHom}{\mathrm{R}Hom}

\DeclareMathOperator{\Spec}{Spec}

\DeclareMathOperator{\Spc}{Spc}

\DeclareMathOperator{\supp}{supp}

\DeclareMathOperator{\loc}{\mathrm{loc}}

\usepackage{microtype}

\usepackage{todonotes}

\definecolor{internationalkleinblue}{rgb}{0.0, 0.18, 0.65}

\title{The importance of being isolated}

\thanks{JOG was supported by the Deutsche Forschungsgemeinschaft (Project-ID 491392403 – TRR 358). GS was supported by the Danmarks Frie Forskningsfond (grant ID: 10.46540/4283-00116B)}

\author{Scott Balchin}
\address{Scott Balchin, Mathematical Sciences Research Centre, Queen’s University Belfast, UK}
\email{s.balchin@qub.ac.uk}
\urladdr{http://bifibrant.com/}

\author{Juan Omar G\'omez}
\address{Juan Omar G\'omez, Fakultat f\"ur Mathematik, Universit\"at Bielefeld, D-33501 Bielefeld, Germany}
\email{jgomez@math.uni-bielefeld.de}
\urladdr{https://sites.google.com/cimat.mx/juanomargomez/home}

\author{Greg Stevenson}
\address{Greg Stevenson, Aarhus University, Department of Mathematics, Ny Munkegade 118, bldg. 1530
DK-8000 Aarhus C, Denmark
}
\email{greg@math.au.dk}

\keywords{}

\begin{document}

\begin{abstract}
\noindent We give both a sufficient condition for and an obstruction to the derived category of a commutative ring being generated by its residue fields. As an illustration, we exhibit a ring for which Foxby’s small support classifies localizing subcategories despite the failure of the local-to-global principle. We also conclude that the residue fields do not generate for a polynomial ring in infinitely many variables. Finally, we give a necessary and sufficient criterion for the derived category to satisfy the local-to-global principle; it turns out to depend solely on the topology of the spectrum.
\end{abstract}

\maketitle




\section{Introduction}

A natural approach to understanding the unbounded derived category of a commutative ring is through its various localizations. In the situations we understand well, for instance in the noetherian setting, these derived localizations reflect the prime ideal structure of the ring. However, the collection of all localizations remains mysterious beyond the noetherian case, with only a few exceptions. In fact, we do not even know whether this class forms a set or a proper class. This lack of understanding is perhaps not surprising given the intricate nature of these localizations in the examples we have (cf. \cites{Nee00, Absflat}).

An obvious starting point is to understand the boundary along which the methods from the noetherian case break down. This leads us to the question: for which commutative rings is the unbounded derived category generated by its residue fields? After all, this is the crucial condition leading to Neeman's classification in the noetherian case \cite{Nee92}. 

On the positive side, we give a sufficient condition for the residue fields to generate:

\begin{thm*}[\ref{thm:suff}]
    Let $A$ be a commutative ring. If $\Modu A$ has Gabriel dimension then $\sfD(A)$ is generated by the residue fields. In particular, the lattice of localizing subcategories of $\sfD(A)$ is given by the power set of $\Spec A$.
\end{thm*}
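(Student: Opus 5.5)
Let $\mathcal{L} = \loc\langle k(\mfp) : \mfp \in \Spec A\rangle \subseteq \sfD(A)$. The plan is to show $A \in \mathcal{L}$, and in fact that every module lies in $\mathcal{L}$. Since $\sfD(A)$ is compactly generated by $A$, this gives $\mathcal{L} = \sfD(A)$.

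The argument is a transfinite induction along the Gabriel filtration of $\Modu A$. Recall that the filtration is defined by $\mathcal{C}_0 = 0$, by letting $\mathcal{C}_{\alpha+1}$ be the localizing (hereditary torsion) subcategory whose objects become semisimple-generated in $\Modu A/\mathcal{C}_\alpha$, and by taking unions (localizing closures) at limit ordinals. Having Gabriel dimension means $\mathcal{C}_\gamma = \Modu A$ for some ordinal $\gamma$. I will prove by induction on $\alpha$ that every $M \in \mathcal{C}_\alpha$, viewed as a complex in degree zero, lies in $\mathcal{L}$.

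The reduction is as follows.
\begin{itemize}
\item $\mathcal{L}$ is a localizing subcategory, so it is closed under extensions, coproducts and hence transfinite extensions (directed unions via homotopy colimits/telescopes).
\item Each object of $\mathcal{C}_{\alpha+1}$ is a transfinite extension of objects of $\mathcal{C}_\alpha$ and modules that are $\mathcal{C}_\alpha$-torsionfree and $\mathcal{C}_\alpha$-closed with simple image in the quotient category. More precisely, $M$ has a filtration by its socle series in the quotient: take $\mathcal{C}_\alpha$-torsion parts and preimages of quotient-socles.
\item Limit stages are immediate, since a module in $\mathcal{C}_\lambda$ is the directed union of its torsion parts for $\mathcal{C}_\alpha$, $\alpha<\lambda$, and $\mathcal{L}$ is closed under such unions.
\end{itemize}

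The key step is therefore: if $S$ is a module whose image in $\Modu A/\mathcal{C}_\alpha$ is simple, then $S \in \mathcal{L}$ modulo $\mathcal{C}_\alpha$-torsion. For this I would use the standard fact (Gabriel, and Krause's work on Gabriel dimension) that such a quotient-simple object corresponds to a prime $\mfp$, namely the associated prime of a critical cyclic module $A/I$. Set $\mfp = \sqrt{I}$, or more precisely the annihilator of a suitable critical submodule. The point is to show that the injective hull/localization of $S$ is a $k(\mfp)$-vector space up to $\mathcal{C}_\alpha$-torsion. Concretely, I would first replace $S$ by a cyclic critical submodule $A/I$. Any $x \notin \mfp$ acts on it with kernel and cokernel in $\mathcal{C}_\alpha$, by simplicity in the quotient. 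Any $x\in\mfp$ acts nilpotently modulo $\mathcal{C}_\alpha$. This should exhibit $S$, up to $\mathcal{C}_\alpha$-torsion and extensions, as built from $k(\mfp)$: the module $A/\mfp$ localized at $\mfp$ differs from $A/\mfp$ by torsion. This step is the main obstacle. The difficulty is making "up to $\mathcal{C}_\alpha$" compatible with the derived category, i.e.\ ensuring that cones of maps with $\mathcal{C}_\alpha$-torsion kernel and cokernel lie in $\mathcal{L}$. The induction hypothesis handles this, since those kernels and cokernels lie in $\mathcal{C}_\alpha \subseteq \mathcal{L}$. Thus $A/\mfp \to k(\mfp)$ has cone in $\mathcal{L}$, and the extensions by $\mfp$-multiplication filtrations finish.

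Finally, for the "in particular": once $\sfD(A)$ is generated by residue fields, I would apply the standard argument. For each $\mfp$, the subcategory $\loc\langle k(\mfp)\rangle$ is minimal, because $k(\mfp)\otimes X$ is a sum of shifts of $k(\mfp)$, a field. Moreover, the $k(\mfp)$ are pairwise orthogonal under $\otimes$. Every localizing subcategory is then the localizing subcategory generated by the $k(\mfp)$ it contains; this uses that localizing subcategories of $\sfD(A)$ are automatically tensor ideals, as $A$ generates. Concretely, $X \in \loc\langle k(\mfp) : k(\mfp)\otimes X\neq 0\rangle$ because $X = A\otimes X \in \loc\langle k(\mfp)\otimes X\rangle$. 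This yields a bijection with subsets of $\Spec A$ via Foxby's small support.
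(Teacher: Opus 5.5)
Your proposal follows the paper's proof. Both run a transfinite induction along the Gabriel filtration, reduce the successor step to modules $A/\mfp$ that become simple modulo the previous stage, and finish with the exact sequence $0\to A/\mfp\to k(\mfp)\to Q\to 0$, where $Q$ lies in the previous stage because elements outside $\mfp$ act invertibly there; the paper reaches the $A/\mfp$'s via associated primes rather than your quotient-socle filtration and cyclic critical submodules.

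The one step to tighten is the choice of prime. $\sqrt{I}$ need not be prime: for $I=(xy,xz)$ in $k[x,y,z]$, $A/I$ is critical but $\sqrt{I}=I$ is not prime. Instead take $\mfp=\ker\bigl(A\to\operatorname{End}(T_\alpha(A/I))\bigr)$, which is prime because the endomorphism ring of a simple object is a division ring. Then $\mfp$ acts by zero, so $\mfp(A/I)\in\mathcal{C}_\alpha$ and $T_\alpha(A/\mfp)\cong T_\alpha(A/I)$ directly, and no $\mfp$-multiplication filtration is needed.
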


Using this result, in Section \ref{sec:ex} we exhibit a ring $A$ such that the Hochster dual $(\Spec A)^\vee$ of the spectrum does not satisfy the $T_D$ separation axiom (i.e. not all points are locally closed), yet the residue fields still generate. In particular, this shows that Foxby’s small support can classify localizing subcategories even in the absence of the local-to-global principle. This also gives a negative answer to \cite{No}*{Questions~12.2 and 12.5}.

We do not know if having Gabriel dimension is a necessary condition for generation by the residue fields. In fact, we are lacking general tools one could reach for to check that the residue fields fail to generate for a given ring. To move towards remedying this, we give an obstruction, in terms of the constructible topology on the spectrum, to generation by the residue fields:

\begin{thm*}[\ref{thm:main}]
    Let $A$ be a commutative ring. If $(\Spec A)^\mathrm{con}$ does not have Cantor-Bendixson rank, then the residue fields do not generate $\sfD(A)$.
\end{thm*}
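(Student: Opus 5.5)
The plan is to reduce to absolutely flat rings, where the spectrum already carries the constructible topology, and there to exhibit a nonzero complex annihilated by every residue field.

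\textbf{Step 1: reformulate the obstruction.} A Boolean (Stone) space fails to have Cantor--Bendixson rank exactly when its perfect kernel is nonempty, i.e.\ when there is a nonempty closed subset $P\subseteq(\Spec A)^{\mathrm{con}}$ with no isolated points. The localizing subcategory generated by the residue fields is automatically a $\otimes$-ideal. Hence it suffices to find $0\neq X\in\sfD(A)$ with $X\otimes_A k(\mfp)=0$ for all $\mfp\in\Spec A$. Indeed, if the residue fields generated $\sfD(A)$, then
\[
X\simeq X\otimes_A A\in\loc\langle X\otimes_A k(\mfp)\rangle=0 .
\]

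\textbf{Step 2: pass to an absolutely flat ring.} I would use the universal absolutely flat $A$-algebra $A\to T(A)$ (Olivier). It induces a homeomorphism $\Spec T(A)\cong(\Spec A)^{\mathrm{con}}$ and identical residue fields. The closed set $P$ corresponds to an ideal $I\subseteq T(A)$, and $B:=T(A)/I$ is absolutely flat with $\Spec B\cong P$ perfect.

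For any $\mfp\in\Spec A$, the fibre $k(\mfp)\otimes_A B$ is an absolutely flat $k(\mfp)$-algebra with at most one prime. It is therefore either $0$ or the field $k(\mfq)=k(\mfp)$ for the unique $\mfq$ over $\mfp$. Consequently, for $X\in\sfD(B)$ regarded in $\sfD(A)$,
\[
k(\mfp)\otimes_A X\simeq (k(\mfp)\otimes_A B)\otimes_B X,
\]
which is either $0$ or $k(\mfq)\otimes_B X$. Restriction of scalars is faithful, so it suffices to produce $0\neq X\in\sfD(B)$ killed by all residue fields of $B$.

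\textbf{Step 3 (main obstacle): absolutely flat rings with perfect spectrum.} Here I would use that every residue field is a flat localization, $k(\mfq)=B_\mfq$, and that no point is isolated. The latter means no $k(\mfq)$ is a summand of $B$, and each $\mfq$ is a non-finitely generated idempotent ideal.

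I would first try the following construction. Choose, by a Cantor-scheme argument inside the perfect space $\Spec B$, a nested system of clopens whose intersections are single points. Using the associated idempotents, build $X$ as a homotopy colimit/limit (Keller/Neeman-style) of the corresponding Koszul-type or idempotent objects. The goal is that the stalk $X_\mfq=k(\mfq)\otimes_B X$ vanishes because locally at each point the system stabilises to zero, while $X$ itself is nonzero, witnessed by a nonvanishing $\lim^1$ or Ext class. This is where perfectness must be used: at an isolated point the system would stabilise to $k(\mfq)$ and the argument would collapse.

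If a direct construction proves too hard, the fallback is to cite or reprove the known result for absolutely flat rings (\cite{Absflat}) that $\sfD(B)$ is generated by residue fields only when $\Spec B$ is scattered, and to extract from its proof the object $X$. Combining the three steps gives the theorem.
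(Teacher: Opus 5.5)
\textbf{There is a genuine gap: the witness demanded in Step~1 does not exist.} Your Step~1 criterion is logically valid as a sufficient condition. After Step~2, however, you need $0\neq X\in\sfD(B)$ with $k(\mathfrak q)\otimes_B X=0$ for every $\mathfrak q\in\Spec B$, and no such object exists. Since $B$ is absolutely flat, $k(\mathfrak q)\cong B_{\mathfrak q}$ is a flat localization, so $H_*(k(\mathfrak q)\otimes_B X)\cong H_*(X)_{\mathfrak q}$. A module all of whose localizations vanish is zero, hence $X\cong 0$. The paper records exactly this fact: support on $\sfD(A^{\mathrm{abs}})$ detects vanishing.

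So Step~3 is not merely the hard part. No Cantor-scheme, $\lim^1$ or Ext construction can succeed, and there is nothing to ``extract'' from the absolutely flat results behind Theorem~\ref{thm:aux}. A clean test case is $A=k[e_1,e_2,\ldots]$ with $e_i^2=e_i$ from Example~\ref{ex:1}. It is absolutely flat with Cantor-set spectrum, so the theorem applies and the residue fields do not generate. Yet every nonzero object of $\sfD(A)$ has nonempty small support. Tensor-orthogonality to all residue fields is strictly stronger than non-generation. For non-semiartinian absolutely flat rings, non-generation is witnessed instead by \emph{Hom}-orthogonality: a nonzero injective $E$ with zero socle satisfies $\mathrm{Hom}(k(\mathfrak q)[n],E)=0$ for all $\mathfrak q$ and $n$ (compare the remark after Theorem~\ref{thm:suff}).

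\textbf{How the paper avoids this.} It constructs no witness at all. If the $k(x)$ generate $\sfD(A)$, then, since $f_*$ is conservative, the $f^*k(x)$ generate $\sfD(A^{\mathrm{abs}})$. Lemmas~\ref{lem:1} and~\ref{lem:2} put $f^*k(x)$ in $\loc(k(x))$. Theorem~\ref{thm:aux} then forces $(\Spec A)^{\mathrm{con}}$ to be scattered. Your passage to the perfect quotient $B$ is harmless but unnecessary.

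\textbf{A second problem in Step~2.} Whichever repair you choose (the paper's, or a Hom-orthogonal $E$ transferred back via $f_*$ and adjunction), the fibre computation conflates derived and underived tensor products. The map $A\to B$ is not flat in general; already $T(\ZZ)\cong\mathbb{F}_p\times T'$ has $p$-torsion. So $k(\mfp)\otimes^{\mathrm{L}}_A B$ can have nonzero higher homology and is not simply $0$ or $k(\mathfrak q)$. What is true is that its homology groups $\mathrm{Tor}^A_i(k(\mfp),B)$ are modules over the ring $k(\mfp)\otimes_A B$, hence zero or $k(\mathfrak q)$-vector spaces. So the complex is $B_{\mathfrak q}$-local and lies in $\loc_B(k(\mathfrak q))$. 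That is precisely the content of Lemma~\ref{lem:2}, proved there via the projection formula. It is what makes the transfer between $A$ and $A^{\mathrm{abs}}$ (or $B$) legitimate.
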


As a concrete example, we use this criterion to show that for the polynomial ring on a countably infinite set of variables the residue fields do not generate; see Example \ref{ex:1}. We emphasize that the converse of our criterion does not necessarily hold; see Example \ref{ex:2}.

Finally, we turn to the local-to-global principle. If $(\Spec A)^\vee$ satisfies the $T_D$ separation axiom then generation by the residue fields implies the local-to-global principle for $\sfD(A)$. We show that if  $(\Spec A)^\mathrm{con}$ does not have Cantor-Bendixson rank, then the local-to-global principle must fail for $\sfD(A)$. Using this observation we prove:

\begin{thm*}[\ref{thm:main2}]
    Let $A$ be a commutative ring. Then $\sfD(A)$ satisfies the local-to-global principle if and only if $(\Spec A)^\vee$ has Cantor-Bendixson rank.
\end{thm*}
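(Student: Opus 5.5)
The plan is to prove the two implications separately, working throughout with the Balmer--Favi idempotents $\Gamma_x$ attached to $\Spc(\sfD(A)^c) \cong \Spec A$. Recall that the closed subsets of $(\Spec A)^\vee$ are exactly the intersections of quasi-compact opens of $\Spec A$. Equivalently, the open subsets of $(\Spec A)^\vee$ are exactly the Thomason subsets.

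\emph{Sufficiency.} Suppose $(\Spec A)^\vee$ has Cantor--Bendixson rank. I will derive the local-to-global principle by the standard transfinite argument, which is the one used for spectra whose Hochster dual is scattered.
\begin{enumerate}
\item Define the Cantor--Bendixson filtration of $(\Spec A)^\vee$ by successively removing isolated points. Each stratum consists of points that are open in the current closed remainder. Hence every point $x$ is visible: $\{x\} = Y \cap Z^c$ with $Y, Z$ Thomason.
\item By transfinite induction on the rank, show that $L_{W}\mathbf{1}$ lies in $\mathrm{Loc}(\Gamma_x \mathbf{1} : x \in W)$ for each closed piece $W$ of the filtration.
\item At successor stages, use the localization triangle $\Gamma_{U}\mathbf{1} \to L_W \mathbf{1} \to L_{W\setminus U}\mathbf{1}$, where $U$ is the open set of isolated points of $W$. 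The object $\Gamma_U \mathbf{1}$ decomposes as a coproduct of the $\Gamma_x\mathbf{1}$ for $x \in U$, since distinct isolated points are separated by Thomason sets.
\item At limit stages, use that $L_{\bigcap W_\alpha}\mathbf{1}$ is a homotopy colimit of the $L_{W_\alpha}\mathbf{1}$, because smashing localizations at intersections of dual-closed sets are filtered colimits.
\end{enumerate}
If this argument already appears in the literature for general rigidly-compactly generated categories, I will simply cite it.

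\emph{Necessity.} Suppose $(\Spec A)^\vee$ has no Cantor--Bendixson rank. Then there is a nonempty dual-closed $W \subseteq \Spec A$ with no isolated points in the subspace topology from $(\Spec A)^\vee$. Since $W$ is an intersection of quasi-compact opens, the finite localization $L_W \sfD(A)$ is again a compactly generated tensor triangulated category with spectrum $W$. The local-to-global principle passes to this localization, because the $\Gamma_x$ commute with $L_W$. So it suffices to show the principle fails for $L_W\sfD(A)$, i.e.\ that $L_W\mathbf{1} \notin \mathrm{Loc}(\Gamma_x \mathbf{1} : x\in W)$. Since $L_W \mathbf{1}\neq 0$, it is enough to show that $\Gamma_x \mathbf{1} = 0$ in $L_W\sfD(A)$ for every $x \in W$, or at least for enough $x$.

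This is where I expect the main obstacle. Visibility of $x$ in $W$ gives $\{x\} = Y\setminus Z$ with $Y, Z$ dual-open. This makes $x$ locally closed in $W^\vee$, but not isolated, so visibility alone does not force $\Gamma_x\mathbf{1} = 0$. I would first try to combine this with the obstruction of Theorem~\ref{thm:main}, as follows.
\begin{enumerate}
\item Shrink $W$ to the dual closure of the set of its visible points. This is again dual-closed and has no dual-isolated points, since an isolated point of a dense subset would be isolated in $W$.
\item Show that on this shrunken set the constructible topology has a nonempty perfect subset. The idea is that each visible point is locally closed in the dual but never open, so every constructible neighbourhood of it contains other points.
\item Apply the argument behind Theorem~\ref{thm:main}. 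It shows that if the constructible topology lacks Cantor--Bendixson rank, then the $\Gamma_x\mathbf{1}$ for $x$ in the perfect part cannot build the unit of $L_W\sfD(A)$.
\end{enumerate}
If step 2 resists, the fallback is to argue directly. I would run the Cantor--Bendixson derivative on $W^\vee$ using only locally closed points, and show that the local-to-global principle forces this iteration to terminate. That would give a dual isolated point in every nonempty dual-closed subset, which is exactly the existence of a Cantor--Bendixson rank.
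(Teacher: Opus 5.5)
Your sufficiency direction is what the paper does: it cites the known theorem that a scattered (hence $T_D$) Hochster dual yields the local-to-global principle. If you write your induction out, note that as set up it does not close. Your successor step deduces the claim for $W_\alpha$ from $W_{\alpha+1}$, while your limit step deduces it for $W_\lambda$ from the $W_\alpha$ with $\alpha<\lambda$. Run it upwards instead, on $\Gamma_{Z_\alpha}\mathbf{1}$ for the increasing Thomason sets $Z_\alpha=\Spec A\setminus W_\alpha$.

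The necessity direction has a genuine gap. Your criterion $\Gamma_x\mathbf{1}=0$ can never be met, since every visible $x\in W$ satisfies $\Gamma_x\mathbf{1}\otimes k(x)\cong k(x)\neq 0$. Step~3 is also not the argument of Theorem~\ref{thm:main}, which concerns the $k(x)$. What you need is $f^*\Gamma_xA\cong\Gamma_xA^{\mathrm{abs}}=k(x)$ along $f\colon A\to A^{\mathrm{abs}}$ (Corollary~\ref{cor:main}). This is best used globally rather than in $L_W\sfD(A)$, which is generally not the derived category of a ring. The local-to-global principle presupposes that all $\Gamma_x$ exist, i.e.\ that $(\Spec A)^\vee$ is $T_D$; this also makes your Step~1 vacuous. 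Applying $f^*$ to $A\in\loc(\Gamma_xA\mid x\in\Spec A)$ gives $A^{\mathrm{abs}}\in\loc(k(x)\mid x\in\Spec A)$. So $(\Spec A)^{\mathrm{con}}$ is scattered by Theorem~\ref{thm:aux}.

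What is then missing is a purely topological fact, Proposition~\ref{prop:topology}: for a $T_D$ coherent space $X$, if $X^{\mathrm{con}}$ is scattered then so is $X$. You apply it to $X=(\Spec A)^\vee$, whose constructible topology is $(\Spec A)^{\mathrm{con}}$. Your Step~2 tries to obtain this pointwise, via \emph{a point not open in $W^\vee$ is not isolated in $W^{\mathrm{con}}$}, and that is false. Let $Y$ be the Cantor set $C$ together with a point $y$ in the closure of every point of $C$. The open sets are the open subsets of $C$ together with $Y$ itself. Then $Y$ is coherent, $T_D$ and has no isolated points. But $\{y\}=Y\setminus C$ is the complement of a quasi-compact open, so $y$ is isolated in $Y^{\mathrm{con}}$; take $\Spec A\cong Y^\vee$.

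The correct argument is global and uses $T_D$ essentially: $(\Spec\ZZ)^{\mathrm{con}}$ is scattered while $\Spec\ZZ$ is not.
\begin{enumerate}
\item If $X^{\mathrm{con}}$ is scattered, the subspace $X^{\mathrm{min}}$ of generic points has a constructibly isolated point $x$.
\item As $X$ and $X^{\mathrm{con}}$ induce the same topology on $X^{\mathrm{min}}$, some open $U$ has $U\cap X^{\mathrm{min}}=\{x\}$.
\item Since opens are closed under generalization, $U\subseteq\overline{\{x\}}$.
\item By $T_D$ the set $(X\setminus\overline{\{x\}})\cup\{x\}$ is open, and its intersection with $U$ is $\{x\}$.
\end{enumerate}
Doing this in every closed subset of $X$ shows $X$ is scattered. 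Your fallback restates the goal without providing any such mechanism.
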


As such, at least for the derived category of a commutative ring, the local-to-global principle is completely determined by the topology of the space $\Spc \sfD^\mathrm{perf}(A)$.

\subsection*{Notation}
Throughout we let $A$ denote a commutative ring. We denote by $\sfD(A)$ the unbounded derived category of $A$. All functors are derived unless explicitly mentioned otherwise; we don't decorate our functors to indicate this fact. For a set of objects $X_\lambda \in \sfD(A)$, indexed by $\Lambda$, we denote by $\loc_A(X_\lambda \mid \lambda \in \Lambda)$ the localizing subcategory they generate. We drop the subscript when the ring is clear from context. We say that \textit{the residue fields generate $\sfD(A)$} if $\loc_A(k(\mfp) \mid \mfp\in \Spec A) =\sfD(A)$. 

We refer to spaces of the form $\Spec A$, for $A$ a commutative ring, as coherent spaces (oft called spectral in the tt literature). For a coherent space $X$, we write $X^\vee$ to denote its Hochster dual.



\section{A sufficient condition}

 We begin by proving a sufficient condition for the residue fields of $A$ to generate $\sfD(A)$. We refer to \cite{GR} and \cite{Pop}*{Chapter~5.5} for the relevant terminology and preliminaries on Gabriel dimension (which is originally due to Gabriel \cite{Gabriel}). We follow the convention that $(\Modu A)_0$ consists of the semiartinian modules and let $T_\alpha \colon \Modu A \to \Modu A /(\Modu A)_\alpha$ denote the localization killing the objects at stage $\alpha$.

 \begin{thm}\label{thm:suff}
 If $\Modu A$ has Gabriel dimension then $\sfD(A)$ is generated by the residue fields. In particular, the lattice of localizing subcategories of $\sfD(A)$ is given by the power set of $\Spec A$.
 \end{thm}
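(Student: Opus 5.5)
We argue by transfinite induction along the Gabriel filtration $(\Modu A)_0 \subseteq (\Modu A)_1 \subseteq \cdots$. Set $\mathcal{L} = \loc(k(\mfp) \mid \mfp \in \Spec A)$. Since $\mathcal{L}$ is localizing and every complex is built from its module components (homotopy colimits of truncations, and bounded complexes are extensions of shifted modules), it suffices to show that every $A$-module $M$ lies in $\mathcal{L}$. So we show by induction on $\alpha$ that every module in $(\Modu A)_\alpha$ lies in $\mathcal{L}$. Because $\Modu A$ has Gabriel dimension, every module then lies in some $(\Modu A)_\alpha$, which gives $\sfD(A) = \mathcal{L}$. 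The last sentence of the statement follows from the standard argument once the residue fields generate: $k(\mfp)\otimes -$ detects the minimal localizing subcategories $\loc(k(\mfp))$, and these are minimal because $k(\mfp)$ is a field object in $\sfD(A)$. Hence a localizing subcategory is determined by its small support.

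\emph{Base case.} A semiartinian module has a transfinite socle filtration, the Loewy series, whose successive quotients are semisimple. Each is a direct sum of simple modules $A/\mfm = k(\mfm)$. Transfinite extensions of this kind are realized as homotopy colimits of triangles in $\sfD(A)$, so $(\Modu A)_0 \subseteq \mathcal{L}$. For limit ordinals $\alpha$, $(\Modu A)_\alpha$ is the localizing subcategory generated by the earlier ones. Each module there has a transfinite filtration with subquotients in earlier stages: take the largest $(\Modu A)_\beta$-torsion submodule for increasing $\beta$. So the limit case is again a homotopy colimit argument.

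\emph{Successor step.} This is the main obstacle. Let $M \in (\Modu A)_{\alpha+1}$. By definition $T_\alpha M$ is semiartinian in the quotient $\Modu A/(\Modu A)_\alpha$. The simple objects there are the $\alpha$-critical modules, which over a commutative ring should correspond to $E(A/\mfp)$-type data, namely to $T_\alpha(A/\mfp)$ for primes $\mfp$ that become "isolated" at stage $\alpha$. We use the localization triangle $\Gamma_\alpha M \to M \to R_\alpha M$, where $R_\alpha M$ is the derived section functor of $T_\alpha$ and $\Gamma_\alpha M$ is built from $(\Modu A)_\alpha$. The cone is in $\mathcal{L}$ by induction, provided $\loc((\Modu A)_\alpha)$ is stable under the derived torsion. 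We can arrange this by checking that the relevant Gabriel topology is of finite type in good cases, or more robustly by arguing that the cohomology modules of $\Gamma_\alpha M$ lie in $(\Modu A)_\alpha$.

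It then remains to show that the right adjoint of simple objects of the quotient lies in $\mathcal{L}$. For an $\alpha$-critical module $C$ with associated prime $\mfp$, the key claim is that the localized module $R_\alpha C$ is a $k(\mfp)$-vector space, up to pieces in $(\Modu A)_\alpha$. The reason is that $\mfp$ acts by zero on the critical quotient: every proper quotient of a critical module is of lower dimension, so multiplication by $s \notin \mfp$ is an isomorphism after $T_\alpha$. This is where I expect the real work: identifying $\mathrm{End}$ of the simple objects with $k(\mfp)$, and controlling the derived (not just underived) section functor. I would first try to prove that $R_\alpha C \simeq k(\mfp)^{\oplus I}$ directly, by showing it is a module over $A_\mfp/\mfp A_\mfp$. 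Failing that, I would show its cohomology consists of such modules modulo $(\Modu A)_\alpha$. Finally, using the Loewy filtration of $T_\alpha M$ in the quotient category and the same homotopy colimit argument gives $M \in \mathcal{L}$.
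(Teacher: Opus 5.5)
Your reduction to modules, base case and limit step match the paper. The successor step, however, carries the whole content of the theorem, and it is not actually proved. You assert that simple objects of $\Modu A/(\Modu A)_\alpha$ ``should correspond'' to $T_\alpha(A/\mfp)$. For $RS_\alpha$ of these simples you offer two strategies (``I would first try \ldots\ Failing that \ldots'') but carry out neither.

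The final move is also wrong as stated. You cannot take the Loewy filtration of $T_\alpha M$ in the quotient category and push it through $RS_\alpha$ with ``the same homotopy colimit argument''. The functor $RS_\alpha$ is a right adjoint and does not commute with homotopy colimits in general. Already $S_\alpha T_\alpha$ commutes with direct limits only if the Gabriel topology is of finite type, and this is not a ``good case'' you can arrange: in the example of Section~\ref{sec:ex} the stage-$0$ Gabriel filter contains $\mfm$ but no proper finitely generated ideal. You would have to lift the filtration to $M$ itself and apply your triangle to each subquotient.

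The missing idea is already hiding in your remark that each $s\notin\mfp$ becomes invertible after $T_\alpha$. Suppose $A/\mfp$ lies in $(\Modu A)_{\alpha+1}$ but not in $(\Modu A)_\alpha$. Then $T_\alpha(A/\mfp)$ is simple (Gordon--Robson), so every $s\notin\mfp$ acts invertibly on it. Since $k(\mfp)$ is the filtered colimit of $A/\mfp$ along these multiplications and $T_\alpha$ preserves colimits, $T_\alpha(A/\mfp)\to T_\alpha k(\mfp)$ is an isomorphism. Hence in $0\to A/\mfp\to k(\mfp)\to Q\to 0$ one has $Q\in(\Modu A)_\alpha\subseteq\mathcal L$, and so $A/\mfp\in\mathcal L$. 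With Gabriel dimension every non-zero module has an associated prime (Popescu), so every module in $(\Modu A)_{\alpha+1}$ is transfinitely filtered by such cyclic modules $A/\mfp$. The induction therefore closes at the module level, with no section functors at all.

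Your derived route can be rescued, but it needs arguments you do not give:
\begin{itemize}
\item For a simple $\Sigma$ in the quotient, $\mfp=\ker(A\to\mathrm{End}(\Sigma))$ is prime by Schur's lemma. So $\mfp$ kills $\Sigma$ and each $s\notin\mfp$ acts invertibly on it. By $A$-linearity each $R^iS_\alpha\Sigma$ is then a $k(\mfp)$-vector space, and since $RS_\alpha\Sigma$ is cohomologically bounded below, $RS_\alpha\Sigma\in\loc(k(\mfp))$.
\item $\Gamma_\alpha M\in\loc((\Modu A)_\alpha)$ because it is cohomologically bounded below with torsion cohomology. For the torsion claim use $R^iS_\alpha T_\alpha M\cong\varinjlim_{I}\mathrm{Ext}^{i+1}_A(A/I,M)$ for $i\geq 1$, with $I$ running over the Gabriel filter.
\end{itemize}
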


The first step is to identify the residue fields as Gabriel simple modules, i.e.\ to check for each $\mfp$ that there is a successor ordinal $\gamma$ with $T_{\gamma-1}k(\mfp)$ simple and $k(\mfp)$ has no non-trivial subobject in $(\Modu A)_{\gamma-1}$. We will say that a collection of modules $\mcM$ is a representative collection of Gabriel simples if it consists of Gabriel simple modules and for each $\alpha$ every simple object of $\Modu A /(\Modu A)_\alpha$ is of the form $T_\alpha M$ for some $M\in \mcM$.

 \begin{lem}\label{lem:gsimple}
 If $A$ is a commutative ring such that $\Modu A$ has Gabriel dimension then the residue fields $k(\mfp)$ for $\mfp\in \Spec A$ are a representative collection of Gabriel simples.
 \end{lem}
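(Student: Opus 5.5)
We will use two standard facts from Gabriel's theory (see \cite{GR}, \cite{Pop}*{Chapter~5.5}), valid because $\Modu A$ has Gabriel dimension: every nonzero module contains a Gabriel simple submodule, and every simple object of $\Modu A/(\Modu A)_\alpha$ is of the form $T_\alpha M$ for a Gabriel simple module $M$ whose dimension is attained at stage $\alpha+1$. We will also use the elementary fact that a nonzero submodule of a Gabriel simple module is Gabriel simple of the same stage.

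The plan has three steps. First we show that Gabriel simple modules over a commutative ring can be reduced to cyclic modules $A/\mfp$ with $\mfp$ prime. Next we pass from $A/\mfp$ to $k(\mfp)$. Finally we check that every prime actually occurs.

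\emph{Reduction to $A/\mfp$.} Let $M$ be Gabriel simple at stage $\gamma$, meaning $T_{\gamma-1}M$ is simple and no nonzero submodule of $M$ lies in $(\Modu A)_{\gamma-1}$. Pick $0\neq x\in M$. Then $Ax\cong A/I$ is again Gabriel simple at stage $\gamma$, and $T_{\gamma-1}(Ax)\cong T_{\gamma-1}M$ because $T_{\gamma-1}M$ is simple and $T_{\gamma-1}(Ax)\neq 0$. We claim that $I$ is prime. Take $a\in A$ and consider multiplication by $a$ on $A/I$, with kernel $K$ and image $a(A/I)\cong (A/I)/K$.
\begin{itemize}
\item Suppose $K\neq 0$ and the image is nonzero. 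Then $T_{\gamma-1}K\neq 0$, so $T_{\gamma-1}K=T_{\gamma-1}(A/I)$ by simplicity. Hence $T_{\gamma-1}$ kills the quotient $(A/I)/K$.
\item That quotient is isomorphic to the image, which is a nonzero submodule of $A/I$. It therefore cannot lie in $(\Modu A)_{\gamma-1}$, a contradiction.
\end{itemize}
So each $a$ either annihilates $A/I$, i.e.\ $a\in I$, or acts injectively on it. This is exactly primality of $I=\mfp$. Consequently every simple object of each quotient category is $T_\alpha(A/\mfp)$ for some prime $\mfp$ with $A/\mfp$ Gabriel simple.

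\emph{From $A/\mfp$ to $k(\mfp)$.} Suppose $A/\mfp$ is Gabriel simple at stage $\gamma$. Write $k(\mfp)$ as the directed union of the submodules $s^{-1}(A/\mfp)\cong A/\mfp$ for $s\notin\mfp$. Each inclusion $A/\mfp\subseteq s^{-1}(A/\mfp)$ is a nonzero map between modules whose images under $T_{\gamma-1}$ are simple, so $T_{\gamma-1}$ turns it into an isomorphism. The localizing subcategory $(\Modu A)_{\gamma-1}$ is closed under colimits, so $T_{\gamma-1}$ commutes with directed unions. Hence $T_{\gamma-1}k(\mfp)\cong T_{\gamma-1}(A/\mfp)$ is simple.

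It remains to see that no nonzero submodule $N\subseteq k(\mfp)$ lies in $(\Modu A)_{\gamma-1}$. Any such $N$ contains some $A y$ with $y\neq 0$. Since $k(\mfp)$ is a torsion-free $A/\mfp$-module, $Ay\cong A/\mfp$, which is not in $(\Modu A)_{\gamma-1}$. So $k(\mfp)$ is Gabriel simple and $T_\alpha k(\mfp)\cong T_\alpha(A/\mfp)$. Together with the first step, every simple of every quotient is $T_\alpha k(\mfp)$ for some $\mfp$.

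\emph{Every prime occurs.} For arbitrary $\mfp\in\Spec A$, the nonzero module $A/\mfp$ contains a Gabriel simple submodule by Gabriel dimension. That submodule in turn contains a cyclic Gabriel simple $Ay$. By the first step $Ay\cong A/\mfq$ with $\mfq$ prime, and $\mfq=\operatorname{ann}(y)=\mfp$ since $A/\mfp$ is a domain. So $A/\mfp$, and hence by the second step $k(\mfp)$, is Gabriel simple for every $\mfp$.

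The main obstacle I expect is the first step: the dichotomy that every element acts either as zero or injectively on a cyclic Gabriel simple module. It relies on carefully using both halves of the definition of Gabriel simple, namely simplicity of $T_{\gamma-1}$ and the absence of nonzero subobjects in $(\Modu A)_{\gamma-1}$. The remaining steps are bookkeeping with directed unions.
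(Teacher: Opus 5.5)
Your proof is correct. Its second half, passing from $A/\mfp$ to $k(\mfp)$, is essentially the paper's argument. You write $k(\mfp)$ as a directed union of copies $s^{-1}(A/\mfp)$ and note that every nonzero cyclic submodule of $k(\mfp)$ is isomorphic to $A/\mfp$. The paper says the same thing: $k(\mfp)$ is a filtered colimit of multiplication maps on $A/\mfp$ and an essential extension of $A/\mfp$.

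The first half takes a genuinely different route.

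\textbf{The paper's route.} It argues through injectives. By Popescu's Theorem~5.16 every nonzero module has an associated prime, so every indecomposable injective is $E(A/\mfp)$. It cites Gordon--Robson for the Gabriel simplicity of the $A/\mfp$. It then invokes Gabriel's Proposition~IV.1.2, which relates simple objects of the quotients $\Modu A/(\Modu A)_\alpha$ to indecomposable injectives, to obtain representativity.

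\textbf{Your route.} You work directly with modules. Your dichotomy says each $a\in A$ acts on a cyclic Gabriel simple $A/I$ either as zero or injectively. This shows the annihilator of any nonzero element of a Gabriel simple module is prime. Combined with the existence of Gabriel simple submodules and their closure under nonzero submodules, it gives both representativity and the Gabriel simplicity of every $A/\mfp$.

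\textbf{Comparison.} Your argument is self-contained and elementary, replacing three citations with a few lines. As a by-product it reproves a fact the paper uses later in the proof of Theorem~\ref{thm:suff}: every nonzero module has an associated prime, since any nonzero $M$ contains some $Ay\cong A/\mfp$. The paper's route instead ties the lemma to Gabriel's classical description of indecomposable injectives by dimension, which is the structural reason the statement holds.

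\textbf{One wording point.} In your second step, $T_{\gamma-1}$ turns $A/\mfp\subseteq s^{-1}(A/\mfp)$ into an isomorphism for a specific reason. The inclusion is a monomorphism whose source is not in $(\Modu A)_{\gamma-1}$. So its image under the exact functor $T_{\gamma-1}$ is a nonzero monomorphism between simples. Being merely a nonzero map would not suffice, since $T_{\gamma-1}$ can kill nonzero maps.
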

 \begin{proof}
 Suppose that $E$ is an indecomposable injective $A$-module. By \cite{Pop}*{Theorem~5.16} $E$ has an associated prime, say $\mfp$, and hence we must have $E \cong E(A/\mfp)$, the injective envelope of $A/\mfp$. It is a consequence of the work of Gordon and Robson \cite{GR}*{Section~3} that the $A/\mfp$ are Gabriel simple. Then combining \cite{Gabriel}*{Proposition~IV.1.2} (see also the discussion beforehand) with our observation about the injectives implies that the $A/\mfp$ are a representative collection of Gabriel simples.

 It then just remains to notice that $k(\mfp)$ is Gabriel simple. If $T_{\gamma-1}A/\mfp$ is simple then, by virtue of $k(\mfp)$ being the filtered colimit along a sequence of endomorphisms of $A/\mfp$, the localization $T_{\gamma -1}$ sends $A/\mfp \to k(\mfp)$ to an isomorphism. Moreover, $k(\mfp)$ clearly has no non-zero subobject in $(\Modu A)_{\gamma-1}$: indeed, $A/\mfp$ already has this property and $k(\mfp)$ is an essential extension of $A/\mfp$. 
 \end{proof}


With this in hand we can prove the theorem:

\begin{proof}[Proof of Theorem \ref{thm:suff}]
We will show by transfinite induction that each $(\Modu A)_\alpha$ is contained in the localizing subcategory $\sfK = \loc(k(\mfp) \mid \mfp \in \Spec A)$ of $\sfD(A)$ generated by the residue fields. The base case for the induction, that $\alpha = 0$, is more or less immediate:  every object of $(\Modu A)_0$ has a filtration by simple modules, which are precisely the residue fields at maximal ideals, and such a filtration witnesses membership in $\sfK$.

The case of limit ordinals is also straightforward. If $\lambda$ is a limit ordinal then $(\Modu A)_\lambda$ is the closure of $\cup_{\kappa < \lambda} (\Modu A)_\kappa$ under filtered colimits. So if $M\in (\Modu A)_\lambda$ then it is a filtered colimit of modules lying in $\sfK$ (by the induction hypothesis) and hence in $\sfK$ by closure under homotopy colimits.
 
It remains to treat the case of successor ordinals, so suppose that $(\Modu A)_\alpha \subseteq \sfK$ and consider $(\Modu A)_{\alpha+1}$ which sits in the localization sequence
\[
(\Modu A)_{\alpha} \to (\Modu A)_{\alpha+1} \stackrel{T_\alpha}{\to} (\Modu A)_{\alpha+1}/(\Modu A)_{\alpha}
\]
where the rightmost category is semiartinian. As used above, by \cite{Pop}*{Theorem~5.16} every non-zero module has an associated prime and so the $A/\mfp$'s which lie in $(\Modu A)_{\alpha+1}$ filter every object in it. Thus, using the induction hypothesis, it is enough to show that the $A/\mfp$ which are $(\alpha+1)$-simple lie in $\sfK$. So say $A/\mfp$ is $(\alpha+1)$-simple. By Lemma~\ref{lem:gsimple} we know that, considering the short exact sequence,
\[
0 \to A/\mfp \to k(\mfp) \to Q \to 0,
\]
we have $T_\alpha(Q) = 0$. Hence $Q\in (\Modu A)_{\alpha}$ and this short exact sequence witnesses that $A/\mfp \in \sfK$. Thus $(\Modu A)_{\alpha+1} \subseteq \sfK$ as required.

This shows that $\Modu A \subseteq \sfK$ as $A$ has Gabriel dimension and hence $\sfK = \sfD(A)$.    
\end{proof}

 \begin{rem}
 We know the converse of Theorem~\ref{thm:suff} holds in at least one case: if the Krull dimension of $A$ is $0$ then the residue fields generate $\sfD(A)$ if and only if $A$ is semiartinian (i.e.\ it has Gabriel dimension $0$). Indeed, if the residue fields generate $\sfD(A)$ then every injective $A$-module receives a non-zero map from a simple module and hence has non-zero socle. Any module is an essential submodule of an injective module and so also has non-zero socle and thus $A$ is semiartinian (cf.\ Theorem~\ref{thm:aux}).
 \end{rem}


\section{An example}\label{sec:ex}

In this section we present an amusing example. 
Consider the commutative ring
\[
A = \frac{k[x_1, x_2, \ldots]_{(x_1,x_2,\ldots)}}{(x_ix_j \mid i\neq j)}
\]
that is, we take the local ring at the origin of union of the axes in infinite dimensional affine space. This is a local ring and is isomorphic to the ring obtained by taking $\oplus_i k[x_i]_{(x_i)}$, viewed as a non-unital ring, and freely making it a unital $k$-algebra. 

Let us compute $\Spec A$. If $\mfp$ is a prime ideal then the relations $x_ix_j = 0$ mean that there is at most one $i$ with $x_i \notin \mfp$. Let us denote by $\mfp_i$ the ideal $(x_j \mid j\neq i)$. The corresponding quotient is $A/\mfp_i = k[x_i]_{(x_i)}$ and so $\mfp_i$ is prime and it must be a minimal prime by the above discussion. The description of $A/\mfp_i$ tells us that $\mfm = (x_i \mid i\geq 1)$ is the unique prime ideal lying over $\mfp_i$ and so as a set
\[
\Spec A = \{\mfp_i \mid i\geq 1\} \cup \{\mfm\}.
\]
The basic open affine $D(x_i)$ is precisely $\{\mfp_i\}$ and so each of the minimal primes is an isolated point and one easily checks that $\Spec A$ is homeomorphic to $(\Spec \ZZ)^\vee$. In particular, $\Spec A$ is a $T_D$-space and the Hochster dual of $\Spec A$ is \emph{not} $T_D$. Thus the local-to-global principle does not hold for $\sfD(A)$ as we cannot define an idempotent at $\mfm$ using finite localizations (cf.\ Consequences~\ref{consequences}).

Let us now show that $\Modu A$ has Gabriel dimension. Let $(\Modu A)_0$ denote the colimit closed Serre subcategory of semiartinian modules and $T_0\colon \Modu A \to \Modu A / (\Modu A)_0$ the corresponding localization. The short exact sequence
\[
0 \to \mfm  \to A \to k \to 0
\]
implies that $T_0\mfm \cong T_0A$. The maximal ideal decomposes as $\mfm = \oplus_i (x_i)$ and so to show $(\Modu A)_1 = \Modu A$ it is enough to verify that the residue fields $k(\mfp_i) \cong k(x_i)$ are simple in $\Modu A / (\Modu A)_0$ and $T_0k(\mfp_i) \cong T_0(x_i)$.

We start with the latter. The short exact sequence
\[
0 \to k[x_i]_{(x_i)} \to k(\mfp_i) \to k(\mfp_i)/k[x_i]_{(x_i)} \to 0
\]
shows that $T_0k[x_i]_{(x_i)} \cong T_0k(\mfp_i)$ as the quotient is semiartinian (in fact this is just a short exact sequence of $k[x_i]_{(x_i)}$-modules and so one is doing the computation over a DVR). It is obvious that $T_0k[x_i]_{(x_i)} \cong T_0(x_i)$. From all of this we learn that killing $(\Modu A)_0$ inverts the action of $x_i$ on $T_0k[x_i]_{(x_i)}$ and so it is simple (or one can again use that we are really doing the computation over a DVR where it is standard).

\begin{rem}
The above makes explicit how the residue fields generate $\sfD(A)$.
\end{rem}

\begin{consequences}\label{consequences}
We see that $A$ can have Gabriel dimension, and hence the lattice of localizing subcategories of $\sfD(A)$ can be naturally lattice isomorphic to the power set of $\Spec A$, without $(\Spec A)^\vee$ being $T_D$. In particular, Foxby's small support can classify localizing subcategories without the local-to-global principle holding. Indeed, in the given example the punctured spectrum is not quasi-compact and so no idempotent cutting out $\{\mfm\}$ can be constructed from finite localizations. Moreover, the situation does not improve if one considers smashing localizations: one easily checks that $T_0A \cong T_0\mfm$ is not compact in $\sfD(A)/\loc(k)$ and so $\loc(k)$ cannot be smashing (in fact, the telescope conjecture holds for $\sfD(A)$). 
\end{consequences}



\section{Recollections on absolutely flat approximations}

We denote by $(\Spec A)^\mathrm{con}$ the space obtained by equipping $\Spec A$ with the constructible topology (see \cite{book_spectralspaces}*{Definition 1.3.11}). Let $f\colon A \to A^\mathrm{abs}$ be the absolutely flat approximation of $A$. This is the initial map from $A$ to a ring of weak global dimension $0$. The map $f$ induces a continuous bijection $\Spec A^\mathrm{abs} \to \Spec A$ identifying $\Spec A^\mathrm{abs}$ with $(\Spec A)^\mathrm{con}$. We regard this as an identification and use the usual symbols for points, e.g.\ $x\in \Spec A$, to refer to points in both $\Spec A$ and $\Spec A^\mathrm{abs}$ with the understanding that these correspond to different prime ideals in different rings. This is somewhat harmless as the residue fields of $A$ and $A^\mathrm{abs}$ at the point $x$ agree. 
An important original reference for material on absolutely flat rings is \cite{Olivier_UAF} and some relevant material is collected in \cite{Absflat}.

For an object $X \in \sfD(A^\mathrm{abs})$ we set
\[
\supp_{A^\mathrm{abs}} X = \{x\in \Spec A^\mathrm{abs} \mid X \otimes k(x) \neq 0 \} = \{x\in \Spec A^\mathrm{abs} \mid X_x \neq 0 \}
\]
where the second equality holds since $k(x)\simeq (A^\mathrm{abs})_x$.  Note that $\supp_{A^\mathrm{abs}} X = \varnothing$ if and only if $X\cong 0$, which is immediate from the second description. 

We will need the following fact which combines \cite{Absflat}*{Theorem~4.7} and \cite{Stevensonltg}*{Theorem~6.3}.

\begin{thm}\label{thm:aux}
Let $A$ be a commutative absolutely flat ring. Then the residue fields generate $\sfD(A)$ if and only if $\Spec A$ has Cantor-Bendixson rank which occurs if and only if $A$ is semiartinian.
\end{thm}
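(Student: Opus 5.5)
The statement combines \cite{Absflat}*{Theorem~4.7} and \cite{Stevensonltg}*{Theorem~6.3}. I will prove it as a cycle of three equivalences for an absolutely flat ring $A$, working throughout with the space $X = \Spec A$. Since $A$ is absolutely flat, $X$ is a Boolean space (Hausdorff, compact, totally disconnected). For every $x \in X$ the local ring $A_x$ is a field, so $k(x) \simeq A_x$ and localization at $x$ is exact. The three conditions are:
\begin{itemize}
\item[(a)] the residue fields generate $\sfD(A)$;
\item[(b)] $X$ has Cantor--Bendixson rank, i.e.\ iterating the removal of isolated points eventually exhausts $X$;
\item[(c)] $A$ is semiartinian.
\end{itemize}

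First I show (b) $\Leftrightarrow$ (c). The key observation is that a point $x$ is isolated in $X$ if and only if the maximal ideal $x$ is generated by an idempotent $1-e$. In that case $A \cong Ae \times A(1-e)$ with $Ae \cong k(x)$, so $k(x)$ is a simple direct summand of $A$.

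More generally, the socle of any module $M$ is spanned by copies of $k(x)$ for $x$ isolated. Indeed, a simple submodule $Am \cong A/x$ satisfies $\operatorname{ann}(m) = x$. Because $x$ is finitely generated modulo the annihilator of $m$, and by absolute flatness every finitely generated ideal is generated by an idempotent, a short argument forces $x$ to be open as a point in the support of $M$.

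From this I read off the Loewy (socle) filtration of $A$ as follows. Take the socle of $A$ to be the ideal $I_1$ generated by the idempotents at isolated points. Then $A/I_1$ is again absolutely flat, and its spectrum is the closed set $X'$ of non-isolated points, i.e.\ the Cantor--Bendixson derivative. Transfinite iteration, taking unions at limit stages, matches the socle series of $A$ with the Cantor--Bendixson filtration of $X$. Hence $A$ is semiartinian exactly when some derivative $X^{(\alpha)}$ is empty, which is (b) $\Leftrightarrow$ (c).

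Second, (c) $\Rightarrow$ (a) follows from Theorem~\ref{thm:suff}, since a semiartinian ring has Gabriel dimension $0$. Alternatively, the proof of Theorem~\ref{thm:suff} shows this directly: every module has a Loewy filtration with simple subquotients $k(\mfm)$, and each such filtration witnesses membership in $\loc(k(x) \mid x\in X)$.

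Third, (a) $\Rightarrow$ (c) follows the remark after Theorem~\ref{thm:suff}. Suppose the residue fields generate $\sfD(A)$ and let $E$ be a nonzero injective module. Then $\RHom(k(x), E) \neq 0$ for some $x$. Since $E$ is injective, this cohomology is $\operatorname{Hom}(k(x), E)$, so $E$ contains a simple submodule. Every nonzero module $M$ is essential in its injective hull $E(M)$, so $M$ also has a nonzero socle. Hence $A$ is semiartinian.

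The main obstacle is the first step. The delicate points are identifying the socle of an absolutely flat ring with the isolated points, and showing that the quotient by the socle has spectrum equal to the derived set. This includes checking that $\Spec(A/I_1)$ is precisely the closed subset of non-isolated points, using that $I_1$ is generated by idempotents so that $V(I_1) = X \setminus \{\text{isolated points}\}$. The same identification must also be carried through the limit stages of the transfinite induction.
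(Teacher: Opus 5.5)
Your argument is correct, but it takes a different route from the paper. The paper's proof is pure citation. It takes the equivalence of generation by residue fields with being semiartinian from \cite{Absflat} and \cite{Stevensonltg}, and the equivalence of semiartinian with $\Spec A$ having Cantor--Bendixson rank from \cite{Prest}. You instead make everything self-contained. For (a)$\Leftrightarrow$(c) you reuse the base case of Theorem~\ref{thm:suff} and the injective-hull argument of the Remark after it. Neither depends on the present theorem, so there is no circularity. For (b)$\Leftrightarrow$(c) you show directly that $V(\mathrm{Soc}_\alpha A) = X^{(\alpha)}$ for every ordinal $\alpha$, using three facts:
\begin{enumerate}
\item the minimal ideals of $A$ are exactly the $Ae$ with $e$ idempotent and $D(e)$ an isolated point;
\item $A/\mathrm{Soc}_\alpha A$ is again absolutely flat with spectrum $V(\mathrm{Soc}_\alpha A)$;
\item $V$ turns unions of chains of ideals into intersections.
\end{enumerate}
This gives more than the statement: the Loewy series of $A$ matches the Cantor--Bendixson filtration of $\Spec A$ stage by stage. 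It also shows that (a)$\Leftrightarrow$(c) needs only Krull dimension $0$, so that each $k(x)$ is simple, while absolute flatness (idempotents) is used only for (b)$\Leftrightarrow$(c). You should state the standard fact you use silently: $A$ is semiartinian if and only if $\mathrm{Soc}_\alpha A = A$ for some $\alpha$. This holds because $f(\mathrm{Soc}_\alpha M)\subseteq \mathrm{Soc}_\alpha N$ for every homomorphism $f$, so the equality passes from $A$ to free modules and then to all modules.

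One correction. Your ``more generally'' claim is false: the socle of an arbitrary module $M$ need not be spanned by copies of $k(x)$ with $x$ isolated, even isolated in the support of $M$. If $y$ is not isolated, then $k(y)$ is a simple submodule of $A\oplus k(y)$, whose support is all of $\Spec A$. The accompanying ``short argument'' is also empty, since $\mathrm{ann}(m) = x$ makes $x$ zero modulo $\mathrm{ann}(m)$. Fortunately you never need the general claim. Only the case of the ring $A$ and its quotients $A/\mathrm{Soc}_\alpha A$ enters, and there the idempotent argument is valid.
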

\begin{proof}
The theorems quoted above show that the residue fields generate $A$ if and only if $A$ is semiartinian. One sees that $A$ is semiartinian if and only if $\Spec A$ has Cantor-Bendixson rank by combining \cite{Prest}*{Theorems 7.3.20 and 8.2.92}.
\end{proof}






\section{The Obstruction}

As above let $f\colon A \to A^\mathrm{abs}$ be the absolutely flat approximation of $A$. We start by computing enough about the $f^*k(x)$ for $x\in \Spec A$ to reduce the problem of generation to one in $A^\mathrm{abs}$. Here, $f^*$ denotes the derived base change functor along $f$, and we write $f_\ast$ for its right adjoint, given by restriction of scalars.

\begin{lem}\label{lem:1}
For a point $x\in \Spec A$ we have $\supp_{A^\mathrm{abs}} f^*k(x) = \{x\}$.
\end{lem}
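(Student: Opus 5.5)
We need $\supp_{A^\mathrm{abs}} f^*k(x) = \{x\}$. By the definition of support on $A^\mathrm{abs}$, a point $y$ lies in $\supp_{A^\mathrm{abs}} f^*k(x)$ exactly when $f^*k(x)\otimes_{A^\mathrm{abs}} k(y) \neq 0$. By associativity of derived base change,
\[
f^*k(x)\otimes_{A^\mathrm{abs}} k(y) \;\cong\; k(x)\otimes_A A^\mathrm{abs}\otimes_{A^\mathrm{abs}} k(y) \;\cong\; k(x)\otimes_A k(y),
\]
where $k(y)$ on the right is regarded as an $A$-module via $A\to A^\mathrm{abs}\to k(y)$. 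As recalled above, this composite $A$-algebra is the residue field of $A$ at the point $y$. So the problem reduces to the classical computation of the derived tensor product $k(x)\otimes^{\mathrm L}_A k(y)$ of two residue fields of $A$.

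\emph{Case $y\neq x$.} Write $\mfp,\mathfrak q$ for the corresponding primes and suppose, without loss of generality, that there is $a\in\mfp\setminus\mathfrak q$. Then $a$ acts by zero on $k(x)$ and invertibly on $k(y)$. Multiplication by $a$ on $k(x)\otimes_A k(y)$ can be computed through either factor, and since it is a natural endomorphism it acts by the same map on all homology groups. Computed through $k(x)$ it is zero; computed through $k(y)$ it is an isomorphism. Hence the complex vanishes.

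\emph{Case $y=x$.} The derived tensor product $k(x)\otimes_A k(x)$ has $H_0=k(x)\otimes_A k(x)$. This is nonzero because it surjects onto $k(x)$ via multiplication. Hence $x$ lies in the support.

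It only remains to be careful that the residue field of $A^\mathrm{abs}$ at $x$ agrees with $k(x)$ as an $A$-algebra; this is exactly the identification fixed in the recollections, so no real obstacle arises. If one prefers not to rely on it, one can instead use that $A^\mathrm{abs}$ is flat over $A$, so $f^*$ is underived, and compute $(k(x)\otimes_A A^\mathrm{abs})_y$ directly. The localization-and-multiplication argument above is the cleaner route, and I expect the whole proof to be short.
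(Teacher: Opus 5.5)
Your main argument is correct. For $y\neq x$ it is the paper's argument. Your associativity isomorphism $f^*k(x)\otimes_{A^\mathrm{abs}}k(y)\cong k(x)\otimes_A k(y)$ is the projection formula $k(x)\otimes f_*k(y)\cong f_*(f^*k(x)\otimes k(y))$ that the paper uses; checking vanishing after restriction of scalars is the paper's appeal to conservativity of $f_*$. Your $\mathrm{Tor}$ argument with $a\in\mfp\setminus\mathfrak q$ proves the vanishing of $k(x)\otimes_A k(y)$, which the paper simply quotes.

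The one genuine difference is at $y=x$. The paper shows $f^*k(x)\neq 0$ via the adjunction $\RHom_A(k(x),f_*k(x))\cong\RHom_{A^\mathrm{abs}}(f^*k(x),k(x))$. It then uses that $\supp_{A^\mathrm{abs}}$ detects vanishing, so the support, already contained in $\{x\}$, must equal $\{x\}$. You instead place $x$ in the support directly, via $H_0(k(x)\otimes^{\mathrm L}_A k(x))=k(x)\otimes_A k(x)\neq 0$. Your route avoids the detection property and is more hands-on; the paper's is purely formal.

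One correction: delete the fallback remark that $A^\mathrm{abs}$ is flat over $A$, because it is false in general. For $A=\ZZ$, the closed point $(p)=V(p)$ is constructible, hence clopen in $(\Spec\ZZ)^\mathrm{con}$. So $\mathbb{F}_p$ splits off $\ZZ^\mathrm{abs}$ as a direct factor, and $\ZZ^\mathrm{abs}$ has $p$-torsion. Correspondingly $f^*\mathbb{F}_p\cong\mathbb{F}_p\otimes^{\mathrm L}_{\ZZ}\mathbb{F}_p$ has nonzero $H_1$. This is why $f^*k(x)$ must be treated as a genuine complex here, and why Lemma~\ref{lem:2} has content. Your main proof never uses the flatness claim, so nothing else is affected.
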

\begin{proof}
If $y\neq x$ is another point then, by the projection formula and agreement of the residue fields for $A$ and $A^\mathrm{abs}$, we have
\begin{align*}
0 &= k(x) \otimes k(y) \\
&= k(x) \otimes f_*k(y) \\
&= f_*(f^*k(x) \otimes k(y)).
\end{align*}
The functor $f_*$ is conservative and so we deduce that $f^*k(x) \otimes k(y)$ vanishes for all $y\neq x$ and so it has support contained in $\{x\}$. The support cannot be empty as
\[
0 \neq \RHom_A(k(x), k(x)) = \RHom_A(k(x), f_*k(x)) \cong \RHom_{A^\mathrm{abs}}(f^*k(x), k(x))
\]
shows that $f^*k(x) \neq 0$ and the support on $\sfD(A^\mathrm{abs})$ detects vanishing.
\end{proof}

\begin{lem}\label{lem:2}
We have $f^*k(x) \in \loc_{A^\mathrm{abs}}(k(x))$.
\end{lem}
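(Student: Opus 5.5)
We want to show $f^*k(x) \in \loc_{A^\mathrm{abs}}(k(x))$, knowing from Lemma~\ref{lem:1} that $\supp_{A^\mathrm{abs}} f^*k(x) = \{x\}$. The plan is to use the fact that in an absolutely flat ring every localization at a prime is the residue field, so objects supported at a single point can be identified with their localization at that point.

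First, we check that localization at $x$ does not change $f^*k(x)$. Write $X = f^*k(x)$ and consider the natural map $X \to X_x = X \otimes_{A^\mathrm{abs}} (A^\mathrm{abs})_x$. We verify that this map is an isomorphism by checking its cone $C$ has empty support. Localizing at $y$ is exact, so $C_y$ is the cone of $X_y \to (X_x)_y$.
\begin{itemize}
\item For $y \neq x$ we have $X_y = 0$ by Lemma~\ref{lem:1}. We also have $(X_x)_y = X \otimes (A^\mathrm{abs})_x \otimes (A^\mathrm{abs})_y = 0$, since $k(x) \otimes k(y) = 0$ for distinct points of an absolutely flat ring.
\item For $y = x$ the map $X_x \to (X_x)_x$ is an isomorphism, because $(A^\mathrm{abs})_x \otimes (A^\mathrm{abs})_x \cong (A^\mathrm{abs})_x$ (localization is a flat epimorphism).
\end{itemize}
Hence $C$ has empty support, so $C \cong 0$ by the vanishing detection recalled before Lemma~\ref{lem:1}. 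Therefore $X \cong X \otimes k(x)$.

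Second, we show $X \otimes k(x)$ lies in $\loc(k(x))$. The object $X \otimes k(x)$ is a module over the field $k(x) = (A^\mathrm{abs})_x$ in the derived sense. Since $A^\mathrm{abs} \to k(x)$ is a flat epimorphism (a localization), $\sfD(k(x))$ embeds fully faithfully in $\sfD(A^\mathrm{abs})$ via restriction. The object $X \otimes k(x)$ lies in the essential image of this restriction.

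Over a field every complex is quasi-isomorphic to the sum of its shifted homology, which is a direct sum of shifts of $k(x)$. Restricting along $A^\mathrm{abs} \to k(x)$ preserves coproducts and shifts. So $X \otimes k(x)$ is a coproduct of shifts of $k(x)$ as an $A^\mathrm{abs}$-module, and in particular lies in $\loc_{A^\mathrm{abs}}(k(x))$.

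The main point needing care is the identification $k(x) \simeq (A^\mathrm{abs})_x$ together with the vanishing $k(x) \otimes k(y) = 0$ for $y \neq x$ in $\sfD(A^\mathrm{abs})$. Both follow because distinct primes in a zero-dimensional ring are incomparable, so $k(x)_y = 0$. This step is what makes the whole argument work; the rest is formal.
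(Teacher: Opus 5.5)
Your proof is correct and follows essentially the same route as the paper. The cone of $X \to X\otimes k(x)$ that you show has empty support is, up to shift, exactly $X\otimes\Gamma_{\mcZ(x)}A^{\mathrm{abs}}$, which the paper kills stalkwise using the triangle $\Gamma_{\mcZ(x)}A^{\mathrm{abs}}\to A^{\mathrm{abs}}\to k(x)\to$; your final step, writing $X\otimes k(x)$ as a coproduct of shifts of $k(x)$ restricted from $\sfD(k(x))$, simply makes explicit the membership in $\loc(k(x))$ that the paper leaves implicit.
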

\begin{proof}
By Lemma~\ref{lem:1} we know $\supp_{A^\mathrm{abs}} f^*k(x) = \{x\}$. This is equivalent to the vanishing of the stalks $(f^*k(x))_y$ for $y\neq x$. Consider then the localization triangle
\[
\Gamma_{\mcZ(x)}A^\mathrm{abs} \to A^\mathrm{abs} \to k(x) \to
\]
corresponding to the Thomason (i.e.\ open) subset $\mcZ(x) = \Spec A^\mathrm{abs} \mysetminus \{x\}$. Here we identify $k(x)$ with $L_{\mcZ(x)}A^\mathrm{abs}$; see \cite[Lemma 4.2]{Absflat}. We know that
\[
0 = \Gamma_{\mcZ(x)}A^\mathrm{abs} \otimes k(x) \cong (\Gamma_{\mcZ(x)}A^\mathrm{abs})_x
\]
and thus $(f^*k(x) \otimes \Gamma_{\mcZ(x)}A^\mathrm{abs})_z \cong (f^*k(x))_z \otimes (\Gamma_{\mcZ(x)}A^\mathrm{abs})_z$ vanishes for all $z\in \Spec A^\mathrm{abs}$. Indeed, $(f^*k(x))_z = 0$ if $z\neq x$ and we have just observed that in the remaining case $z=x$ the object $(\Gamma_{\mcZ(x)}A^\mathrm{abs})_z$ vanishes. It follows that $f^*k(x) \otimes \Gamma_{\mcZ(x)}A^\mathrm{abs}$ is already trivial and so we have $f^*k(x) \cong f^*k(x) \otimes k(x)$. Thus $f^*k(x)$ lies in $\loc_{A^\mathrm{abs}}(k(x))$ as claimed.
\end{proof}

\begin{thm}\label{thm:main}
If $(\Spec A)^\mathrm{con}$ does not have Cantor-Bendixson rank then the residue fields do not generate $\sfD(A)$, that is
\[
\loc_A(k(x) \mid x\in \Spec A) \subsetneq \sfD(A).
\]
\end{thm}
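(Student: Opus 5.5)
We argue by contraposition: assume $\loc_A(k(x) \mid x\in \Spec A) = \sfD(A)$ and deduce that $(\Spec A)^\mathrm{con} = \Spec A^\mathrm{abs}$ has Cantor-Bendixson rank. By Theorem~\ref{thm:aux} it is enough to show that the residue fields generate $\sfD(A^\mathrm{abs})$. The plan is to transport generation along the base change $f^*\colon \sfD(A)\to\sfD(A^\mathrm{abs})$ and then use Lemma~\ref{lem:2} to control the images of the residue fields.

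First, $f^*$ is a coproduct-preserving exact functor, being a left adjoint. Hence it sends $\loc_A(k(x)\mid x)$ into $\loc_{A^\mathrm{abs}}(f^*k(x)\mid x)$. Under our assumption this gives
\[
f^*A \cong A^\mathrm{abs} \in \loc_{A^\mathrm{abs}}(f^*k(x) \mid x\in \Spec A).
\]
The precise inclusion needed is that the preimage of a localizing subcategory under $f^*$ is localizing and contains the $k(x)$; it therefore contains $\sfD(A)$, and in particular $A$. Next, Lemma~\ref{lem:2} gives $f^*k(x)\in \loc_{A^\mathrm{abs}}(k(x))$ for each $x$. Combining the two, $A^\mathrm{abs}\in \loc_{A^\mathrm{abs}}(k(x)\mid x\in\Spec A^\mathrm{abs})$, where we use the identification of points and residue fields recalled in the previous section.

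Since $A^\mathrm{abs}$ is a compact generator of $\sfD(A^\mathrm{abs})$, any localizing subcategory containing it is everything. So the residue fields generate $\sfD(A^\mathrm{abs})$. Theorem~\ref{thm:aux} then says that $\Spec A^\mathrm{abs}$, and hence $(\Spec A)^\mathrm{con}$, has Cantor-Bendixson rank. This contradicts the hypothesis, so the inclusion in the statement is strict.

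No step here is really an obstacle, because the substantive input is already in Lemmas~\ref{lem:1} and~\ref{lem:2} and in Theorem~\ref{thm:aux}. The one point needing care is the identification of $k(x)$ as an $A$-module with the residue field of $A^\mathrm{abs}$ at the corresponding point, which the previous section records. With that in place, the residue fields of $A^\mathrm{abs}$ are exactly the objects $k(x)$ appearing in Lemma~\ref{lem:2}.
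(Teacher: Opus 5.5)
Your proof is correct and matches the paper's argument: push generation along $f^*$, use Lemma~\ref{lem:2} to replace each $f^*k(x)$ by $k(x)$, and conclude with Theorem~\ref{thm:aux}. The only difference is how you show that $f^*$ preserves generation. The paper deduces this from conservativity of $f_*$ via a cited lemma, whereas you note that the preimage of a localizing subcategory under the exact coproduct-preserving functor $f^*$ is localizing and that $f^*A \cong A^\mathrm{abs}$ is a generator, which is valid and more self-contained.
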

\begin{proof}
We consider $f\colon A \to A^\mathrm{abs}$ as above. The functor $f_*$ is conservative and so $f^*$ preserves generation (this is well known, see \cite{JS}*{Lemma~3.3} for a precise statement). So if we have
\[
\loc_A(k(x) \mid x\in \Spec A) = \sfD(A)
\]
it would follow that
\[
\loc_{A^\mathrm{abs}}(f^*k(x) \mid x\in \Spec A) = \sfD(A^\mathrm{abs}).
\]
By Lemma~\ref{lem:2} this would imply that 
\[
\loc_{A^\mathrm{abs}}(k(x) \mid x\in \Spec A) = \sfD(A^\mathrm{abs}).
\]
However, we know from Theorem~\ref{thm:aux} that this is only possible if $\Spec A^\mathrm{abs} = (\Spec A)^\mathrm{con}$ has Cantor-Bendixson rank. So if this is not the case then the residue fields cannot generate $\sfD(A)$.
\end{proof}

The same idea obstructs the local-to-global principle.

\begin{cor}\label{cor:main}
If the Hochster dual of $\Spec A$ is $T_D$, so that the idempotents $\Gamma_xA$ exist for all $x\in \Spec A$, then if $(\Spec A)^\mathrm{con}$ does not have Cantor-Bendixson rank the local-to-global principle fails for $\sfD(A)$. 
\end{cor}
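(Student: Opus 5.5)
We argue by contradiction. Assume $(\Spec A)^\vee$ is $T_D$, so that for every $x\in \Spec A$ there is an idempotent $\Gamma_xA = \Gamma_{V}A\otimes L_{W}A$, where $\{x\} = V\cap W^c$ with $V$ Thomason and $W$ a Thomason subset. Assume also that the local-to-global principle holds, i.e.\ $A \in \loc(\Gamma_xA \mid x\in\Spec A)$, and hence $\sfD(A) = \loc(\Gamma_xA\mid x\in \Spec A)$. We will deduce that $(\Spec A)^\mathrm{con}$ has Cantor-Bendixson rank, contradicting the hypothesis. The strategy is the same as in the proof of Theorem~\ref{thm:main}, with $\Gamma_xA$ in place of $k(x)$.

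First, base change along $f\colon A\to A^\mathrm{abs}$. Since $f_*$ is conservative, $f^*$ preserves generation (\cite{JS}*{Lemma~3.3}). Hence
\[
\sfD(A^\mathrm{abs}) = \loc_{A^\mathrm{abs}}(f^*\Gamma_xA \mid x\in \Spec A).
\]

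Next, the analogue of Lemma~\ref{lem:1}: we claim $\supp_{A^\mathrm{abs}} f^*\Gamma_xA \subseteq \{x\}$. For $y\neq x$, the projection formula gives
\[
f_*(f^*\Gamma_xA\otimes k(y)) \cong \Gamma_xA\otimes k(y).
\]
This vanishes, because the support of $\Gamma_xA$ (computed with the small support) is $\{x\}$. That fact is standard: $\Gamma_VA\otimes k(y)=0$ for $y\notin V$, and $L_WA\otimes k(y)=0$ for $y\in W$. Conservativity of $f_*$ then gives the claim. The support may be empty, but that is harmless.

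With this, the argument of Lemma~\ref{lem:2} applies verbatim. The stalks of $f^*\Gamma_xA\otimes\Gamma_{\mcZ(x)}A^\mathrm{abs}$ vanish everywhere, so
\[
f^*\Gamma_xA\cong f^*\Gamma_xA\otimes k(x),
\]
and therefore $f^*\Gamma_xA\in\loc_{A^\mathrm{abs}}(k(x))$. Consequently $\sfD(A^\mathrm{abs}) = \loc(k(x)\mid x\in\Spec A^\mathrm{abs})$. By Theorem~\ref{thm:aux}, this forces $\Spec A^\mathrm{abs} = (\Spec A)^\mathrm{con}$ to have Cantor-Bendixson rank, which is the desired contradiction.

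The main point needing care is the support computation $\supp \Gamma_xA = \{x\}$ for Foxby's small support over a non-noetherian ring. This uses only that $\Gamma_V$ and $L_W$ are finite (smashing) localizations associated to Thomason subsets, together with their standard tensor-vanishing against residue fields, for which I would cite the usual facts on finite localizations. Everything else is a formal repetition of the proof of Theorem~\ref{thm:main}.
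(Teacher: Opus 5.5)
Your proof is correct, and its skeleton matches the paper's. You base change along $f\colon A\to A^{\mathrm{abs}}$, show that the images of the generators lie in $\loc_{A^{\mathrm{abs}}}(k(x)\mid x\in\Spec A)$, and then invoke Theorem~\ref{thm:aux}.

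The difference is in how you control $f^*\Gamma_xA$. The paper cites \cite{Absflat}*{Remark~2.10} for the exact identification $f^*\Gamma_xA\cong\Gamma_xA^{\mathrm{abs}}=k(x)$. That identification rests on finite localizations at Thomason subsets commuting with base change to $A^{\mathrm{abs}}$, together with the computation of these idempotents over an absolutely flat ring.

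You instead rerun Lemmas~\ref{lem:1} and~\ref{lem:2} with $\Gamma_xA$ in place of $k(x)$. This needs only the elementary vanishing $\Gamma_xA\otimes k(y)=0$ for $y\neq x$. Your Koszul/\v{C}ech reasoning for this is sound: $\Gamma_VA\otimes k(y)=0$ when $y\notin V$, and $L_WA\otimes k(y)=0$ when $y\in W$. You also rightly note that non-emptiness of the support plays no role in the argument of Lemma~\ref{lem:2}.

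The paper's route is shorter and gives the sharper conclusion. Yours is self-contained and makes clear that only small supports matter. It shows uniformly that any generating family of $\sfD(A)$ whose members each have small support contained in a single point forces $(\Spec A)^{\mathrm{con}}$ to have Cantor--Bendixson rank. That statement subsumes both Theorem~\ref{thm:main} and this corollary.

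As a minor aside, the generation step does not need conservativity of $f_*$. The object $f^*A=A^{\mathrm{abs}}$ already lies in $\loc_{A^{\mathrm{abs}}}(f^*\Gamma_xA\mid x\in\Spec A)$.
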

\begin{proof}
By \cite{Absflat}*{Remark~2.10} in this case $f^*\Gamma_xA \cong \Gamma_xA^\mathrm{abs} = k(x)$. So as in the argument above if the local-to-global principle held it would imply the residue fields generate for $A^\mathrm{abs}$ and hence $(\Spec A)^\mathrm{con}$ would have to have Cantor-Bendixson rank.
\end{proof}

This turns out to be the missing piece in completely determining when $\sfD(A)$ satisfies the local-to-global principle. We return to this in Section~\ref{sec:gremlins} after giving some examples.



\section{Examples}

\begin{ex}\label{ex:1}
Let $k$ be a field and $R = k[x_1,x_2,\ldots]$. We will apply Theorem~\ref{thm:main} to check that the residue fields don't generate $\sfD(R)$. Consider the ideal $I = (x_i^2 - x_i \mid i\geq 1)$ and set $S = R/I \cong k[e_1,e_2,\ldots]$ where each $e_i = e_i^2$. It is enough to show that the closed subset $\Spec S$ of $\Spec R$ does not have Cantor-Bendixson rank; see for instance, \cite[Lemma 5.2]{Stevensonltg}. 

If $\mfp \in \Spec S$ then as $e_i(1-e_i) = 0$ we must have either $e_i$ or $1-e_i$ in $\mfp$. It follows that $\mfp$ is uniquely determined by the element of $\{0,1\}^\NN$ which describes which $e_i$ are in $\mfp$. We see that $S$ is zero dimensional and since it is reduced it is absolutely flat. Thus $\Spec S$ is a compact Hausdorff space. One guesses from the above, and easily checks, that it is $\{0,1\}^\NN$ with the product topology i.e.\ the Cantor set. Hence $\Spec S$ does not have Cantor-Bendixson rank and so neither does $\Spec R$.
\end{ex}

\begin{ex}\label{ex:2}
The condition that $(\Spec A)^\mathrm{con}$ has Cantor-Bendixson rank is not sufficient for the residue fields to generate $\sfD(A)$. For instance:
\begin{enumerate}
    \item If $A$ is a non-noetherian rank one valuation domain, then the residue fields do not generate. This is a consequence of the results of \cite{BazzoniStovicek}. 
    \item Let $A = k[x_2, x_3, x_4, \ldots]/(x_2^2, x_3^3, x_4^4, \ldots)$. In this case, there is a unique prime ideal, and the results of \cite{Nee00} show that the corresponding residue field does not generate $\sfD(A)$.
\end{enumerate}
\end{ex}

\begin{rem}
    We note that none of the above examples have Gabriel dimension.
\end{rem}

\begin{ex}
  For a coherent space $X$ the constructible topology on $X$ is self-dual in the sense that $X^\mathrm{con} = (X^\vee)^\mathrm{con}$. One deduces that for each coherent space $X$ such that $X^\mathrm{con}$ does not have Cantor-Bendixson rank the classes of commutative rings
  \[
  \{A \in \mathsf{CRing} \mid \Spec A \cong X\} \text{ and } \{A \in \mathsf{CRing} \mid \Spec A \cong X^\vee\}
  \]
  consist of rings for which the residue fields don't generate the derived category.
\end{ex}


\section{Consequences for the local-to-global principle}\label{sec:gremlins}


As alluded to after Corollary~\ref{cor:main}, we are now in a position to completely characterize when the local-to-global principle holds for the derived category. In order to get there we need to discuss a little point-set topology.

Recall that a topological space $X$ is \textit{scattered} if every non-empty subset $Y \subseteq X$ contains a point isolated in $Y$. It is a classical result that $X$ being scattered is equivalent to $X$ having Cantor-Bendixson rank.

\begin{lem}
    Let $X$ be a scattered space. Then $X$ is $T_D$.
\end{lem}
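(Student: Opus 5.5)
We will verify the $T_D$ axiom directly from the definition of scattered. Recall that $X$ is $T_D$ if for every point $x\in X$ the set $\overline{\{x\}}\mysetminus\{x\}$ is closed. Equivalently, every point $x$ is locally closed: there is an open set $U$ with $U\cap\overline{\{x\}} = \{x\}$. We work with the second formulation and show that each point is locally closed.

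Fix $x\in X$ and consider the non-empty subset $Y = \overline{\{x\}}$. Since $X$ is scattered, $Y$ has a point $y$ which is isolated in $Y$. That is, there is an open $U\subseteq X$ with $U\cap Y = \{y\}$.

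The key step is to show that $y = x$. Since $y \in Y = \overline{\{x\}}$ and $U$ is an open neighbourhood of $y$, the set $U$ must meet $\{x\}$, so $x\in U$. We also have $x \in Y$, so $x\in U\cap Y = \{y\}$, which forces $x=y$. Hence $U\cap\overline{\{x\}} = \{x\}$, so $x$ is locally closed.

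It remains to translate this into the closedness formulation. We have
\[
\overline{\{x\}}\mysetminus\{x\} = \overline{\{x\}}\cap (X\mysetminus U),
\]
which is an intersection of two closed sets and hence closed. Thus $X$ is $T_D$.

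There is no real obstacle here. The only point requiring care is to apply the scattered hypothesis to the closure $\overline{\{x\}}$ rather than to an arbitrary set, so that the isolated point is forced to coincide with $x$. Note that the argument uses nothing beyond the definition of scattered: no coherence or sobriety assumption on $X$ is needed.
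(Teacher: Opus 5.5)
Your proof is correct and matches the paper's argument: apply scatteredness to $\overline{\{x\}}$, then note that the isolated point must be $x$, because any open set containing a point of $\overline{\{x\}}$ contains $x$ (the paper phrases this as open sets being generalization closed). Your final translation into closedness of $\overline{\{x\}}$ minus $\{x\}$ is also correct, though not needed here, since the paper uses ``every point is locally closed'' as its working definition of $T_D$.
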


\begin{proof}
    Let $x \in X$. Then $\overline{\{x\}}$ has an isolated point. Since open sets are generalization closed, this isolated point must be $x$. That is, $\{x\} =  \overline{\{x\}} \cap U$ with $U \subseteq X$ open. Hence $\{x\}$ is locally closed as required.
\end{proof}

\begin{lem}\label{lem:tdopens}
    Let $X$ be a $T_D$ space. Then for all $x \in X$ we have $(X \setminus  \overline{\{x\}}) \cup \{x\}$ is an open subset of $X$.
\end{lem}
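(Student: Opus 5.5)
Since $X$ is $T_D$, the point $x$ is locally closed. Hence there is an open set $U \subseteq X$ with
\[
\{x\} = \overline{\{x\}} \cap U .
\]
The plan is to show that
\[
(X \setminus \overline{\{x\}}) \cup \{x\} = (X \setminus \overline{\{x\}}) \cup U.
\]
The right-hand side is a union of two open sets, so it is open, and the lemma follows.

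For the inclusion $\subseteq$, note that $x \in U$, so the left side is contained in the right side.

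For the inclusion $\supseteq$, take $u \in U$. There are two cases.
\begin{itemize}
\item If $u \notin \overline{\{x\}}$, then $u$ lies in $X \setminus \overline{\{x\}}$, which is part of the left side.
\item If $u \in \overline{\{x\}}$, then $u \in \overline{\{x\}} \cap U = \{x\}$, so $u = x$, which again lies in the left side.
\end{itemize}
This proves the equality.

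The only point needing care is using the correct form of the $T_D$ axiom. I take it as the statement that each $\{x\}$ is locally closed, which is equivalent to $\overline{\{x\}} \setminus \{x\}$ being closed. If one starts from that second formulation instead, the same conclusion is immediate: the set in question is exactly the complement of $\overline{\{x\}} \setminus \{x\}$, which is open because $\overline{\{x\}} \setminus \{x\}$ is closed.
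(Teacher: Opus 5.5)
Your proof is correct and matches the paper's: choose an open $U$ with $\{x\} = \overline{\{x\}} \cap U$ and observe that $(X \setminus \overline{\{x\}}) \cup \{x\} = (X \setminus \overline{\{x\}}) \cup U$. You write out both inclusions, which the paper leaves implicit. Your closing remark, that the set is the complement of the closed set $\overline{\{x\}} \setminus \{x\}$, is also valid.
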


\begin{proof}
    Choose an open $U \ni x$ such that $\{x\} = U \cap  \overline{\{x\}}$. Then
    \[
    (X \setminus  \overline{\{x\}}) \cup \{x\} = (X \setminus \overline{\{x\}}) \cup U.\qedhere
    \]
\end{proof}

\begin{lem}
    A space $X$ is scattered if and only if every non-empty closed subset of $X$ has an isolated point.
\end{lem}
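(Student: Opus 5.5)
One direction is immediate: if $X$ is scattered then every non-empty subset has a point isolated in it. In particular this applies to every non-empty closed subset. So the plan is really to prove the converse. Assume every non-empty closed subset of $X$ has an isolated point, and let $Y \subseteq X$ be an arbitrary non-empty subset. We must produce a point of $Y$ that is isolated in $Y$ with respect to the subspace topology.

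The natural move is to pass to the closure. Set $C = \overline{Y}$, which is a non-empty closed subset. By hypothesis there is a point $x \in C$ isolated in $C$, i.e.\ an open $U \subseteq X$ with $U \cap C = \{x\}$.

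The key step is to check that $x$ actually lies in $Y$. Since $x \in \overline{Y}$, every open neighbourhood of $x$ meets $Y$; in particular $U \cap Y \neq \varnothing$. But $U \cap Y \subseteq U \cap C = \{x\}$, so $U \cap Y = \{x\}$. This gives $x \in Y$, and the same equation says that $x$ is isolated in $Y$. Hence $Y$ has an isolated point, and $X$ is scattered.

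There is no genuine obstacle here. The only point requiring care is the observation that an isolated point of the closure must already belong to the set. This uses only the definition of closure, that each neighbourhood of a point of $\overline{Y}$ meets $Y$, and no separation axioms are needed.
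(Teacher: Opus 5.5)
Your proof is correct and follows the paper's argument: pass to $\overline{Y}$, take $x$ isolated in it via an open $U$ with $U \cap \overline{Y} = \{x\}$, and use $x \in \overline{Y}$ to get $U \cap Y = \{x\}$. You also make explicit that this equation shows $x$ is isolated in $Y$, a step the paper leaves implicit when it concludes only that the point lies in $Y$.
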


\begin{proof}
    The forward direction is immediate. For the converse, suppose that closed subsets have isolated points, and let $Y \subseteq X$. Then $\overline{Y}$ has an isolated point, $y \in \overline{Y}$, i.e.\ there exists $U$ open with $U \cap \overline{Y} = \{y\}$. But then $y \in Y$ as required.
\end{proof}

\begin{lem}\label{lem:finer}
    Let $X$ be a coherent space. If $X$ is scattered then $X^{\mathrm{con}}$ is scattered.
\end{lem}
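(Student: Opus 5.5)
The plan is to observe that being scattered passes to any finer topology on the same underlying set. Since the constructible topology on a coherent space is finer than the original one, the statement should follow directly from the definition of scattered. I do not expect to need coherence of $X$ beyond the fact that $X^{\mathrm{con}}$ refines $X$, and I do not plan to invoke Hochster duality or the closed-subset criterion proved just above.

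First, I recall that for a coherent space $X$ every quasi-compact open subset of $X$ is constructible, hence clopen in $X^{\mathrm{con}}$. Every open subset of $X$ is a union of quasi-compact opens, so it is open in $X^{\mathrm{con}}$. Thus the identity map $X^{\mathrm{con}} \to X$ is continuous, i.e.\ the constructible topology is finer (cf.\ \cite{book_spectralspaces}*{Definition 1.3.11}).

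Next, I take a non-empty subset $Y \subseteq X^{\mathrm{con}}$ and regard it as a subset of $X$. Since $X$ is scattered, $Y$ contains a point $y$ isolated in $Y$ for the original topology. That is, there is an open $U \subseteq X$ with $U \cap Y = \{y\}$. By the previous step $U$ is also open in $X^{\mathrm{con}}$, so the same equation shows $y$ is isolated in $Y$ for the constructible topology. As $Y$ was arbitrary, $X^{\mathrm{con}}$ is scattered.

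I do not anticipate a genuine obstacle. The only point requiring care is the justification that opens of $X$ remain open in $X^{\mathrm{con}}$, which is the standard fact that quasi-compact opens form a basis and are constructible.
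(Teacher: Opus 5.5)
Your proof is correct and is the paper's argument: the paper's one-line proof also rests on the constructible topology being finer than the topology of $X$. You additionally spell out why the topology is finer (quasi-compact opens are constructible and form a basis) and why being scattered passes to a finer topology on the same set.
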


\begin{proof}
    This is clear as the constructible topology is finer than the topology of $X$.
\end{proof}

\begin{lem}\label{lem:gibisolated}
    Let $X$ be a $T_D$ coherent space such that $X^{\mathrm{con}}$ is scattered. Then $X$ has an isolated point.
\end{lem}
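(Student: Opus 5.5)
We need to show that a $T_D$ coherent space $X$ with $X^{\mathrm{con}}$ scattered has an isolated point. The plan is to locate the candidate point using the scatteredness of $X^{\mathrm{con}}$, and then to use the $T_D$ hypothesis, through Lemma~\ref{lem:tdopens}, to upgrade an isolated point of the constructible topology to an isolated point of $X$.

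\textbf{Step 1: choose the candidate.} The set of closed points of $X$ is not quite the right thing to use. Instead, take the subset $\mathrm{Max}$ of maximal points for specialization, i.e.\ the generic points of the irreducible components; equivalently, the minimal primes when $X=\Spec A$. Every point generalizes to a maximal one, so $\mathrm{Max}$ is non-empty, and since $X^{\mathrm{con}}$ is scattered, $\mathrm{Max}$ has a point $x$ isolated in $\mathrm{Max}$ for the constructible topology. Thus there is a constructible set $C$ with $C\cap \mathrm{Max}=\{x\}$. We may shrink $C$ to have the form $U\cap V$, with $U$ quasi-compact open and $V$ closed with quasi-compact open complement.

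\textbf{Step 2: turn $C$ into an honest open neighbourhood of $x$.} Since $x$ is maximal, $\overline{\{x\}}$ is an irreducible component, and Lemma~\ref{lem:tdopens} says that $W=(X\setminus\overline{\{x\}})\cup\{x\}$ is open. The natural candidate is $W\cap U'$ for a suitable open $U'$, and we must rule out other points $y\neq x$ lying in it. Such a $y$ would not be in $\overline{\{x\}}$, so $y$ lies in some other component. The aim is to use the constructible isolation to choose $U'$ avoiding all other components near $x$.

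\textbf{Step 3: the main obstacle.} Constructible isolation only controls the maximal points, and these need not have clopen complements. Here I would instead argue on the full space. Let $x$ be isolated in $X^{\mathrm{con}}$ relative to the closed set $\overline{\mathrm{Max}}^{\,\mathrm{con}}$, or more simply apply scatteredness in the following refined way. Recall that the open sets of $X$ are exactly the constructibly-open sets closed under generalization, so it suffices to find a constructible-open, generalization-closed set equal to $\{x\}$. Set $G=\{x\}$ with $x$ maximal and isolated among the maximal points. If some $y$ in a constructible neighbourhood $N$ of $x$ generalizes to a maximal point $z\in N$, then $z=x$, so $y\in\overline{\{x\}}$. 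By the $T_D$ property there is an open $O$ with $O\cap\overline{\{x\}}=\{x\}$. Intersecting with a quasi-compact open neighbourhood chosen from $N$, we use compactness of the constructible topology (the generalization closure of a constructible-closed set is closed in $X^{\mathrm{con}}$) to replace $N$ by an open $U$ of $X$ whose maximal points are only $x$. Then $U\cap O$ is open, and every point in it generalizes within $U$ (open sets are generalization closed) to a maximal point, which must be $x$. Hence every point of $U\cap O$ lies in $\overline{\{x\}}\cap O=\{x\}$, so $x$ is isolated.

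The delicate point is the compactness argument producing an open $U$ with $U\cap\mathrm{Max}=\{x\}$. Concretely, I would show that $K=\{y : y \text{ has a generalization in } \mathrm{Max}\setminus N\}$ is closed under specialization and constructibly closed, hence Zariski closed, and does not contain $x$. We can then take $U=X\setminus K$.
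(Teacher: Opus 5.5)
Your overall route is the paper's. You pick a generic point $x$ isolated in $\mathrm{Max}$ for the constructible topology, upgrade this to a Zariski open $U$ with $U\cap\mathrm{Max}=\{x\}$ (so $U\subseteq\overline{\{x\}}$), and then cut down with the $T_D$ open. Your final step using $O$ directly is correct. The paper obtains $U$ by citing that $\mathrm{Max}$ inherits the same topology from $X$ as from $X^{\mathrm{con}}$.

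Your own argument for this, which you flag as the delicate point, fails as written. Your $K$ is the specialization closure of $\mathrm{Max}\setminus N$. The fact that the specialization closure of a pro-constructible set is Zariski closed does not apply, because the set of generic points of a coherent space need not be pro-constructible. This can fail even under the lemma's hypotheses. Let $X=\{x,w\}\cup\{z_1,z_2,\dots\}$, and declare $U$ open iff $w\in U$ implies that $x\in U$ and $z_i\in U$ for all but finitely many $i$. Then:
\begin{itemize}
\item $X$ is a coherent space.
\item $X$ is $T_D$, since $\{x\}$ and each $\{z_i\}$ are open and $\{w\}$ is closed.
\item $X^{\mathrm{con}}$ is $(\omega+1)\sqcup\{x\}$ with $w$ the limit of the $z_i$, hence scattered.
\item $\mathrm{Max}=\{x\}\cup\{z_i\}_i$, because $\overline{\{x\}}=\{x,w\}$.
\end{itemize}
Any $N$ with $N\cap\mathrm{Max}=\{x\}$ gives $K=\{z_i\}_i$. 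This set has $w$ in its constructible closure, so it is neither constructibly nor Zariski closed. Moreover $X\setminus K=\{x,w\}$ is not open, so your $U$ is not open.

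The repair is small: use the constructible set $X\setminus N$ in place of $\mathrm{Max}\setminus N$. Let $K'$ be the set of specializations of points of $X\setminus N$. Then:
\begin{itemize}
\item $K'$ is Zariski closed, since $X\setminus N$ is constructible.
\item $x\notin K'$, since the only generalization of $x$ is $x$ itself, which lies in $N$.
\item $K'$ contains $\mathrm{Max}\setminus\{x\}$.
\end{itemize}
Hence $U=X\setminus K'$ is open with $U\cap\mathrm{Max}=\{x\}$. With this change the rest of your argument goes through, and it then gives a self-contained proof of the fact the paper quotes rather than citing it.
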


\begin{proof}
    As $X^{\mathrm{con}}$ is scattered, the subset $X^{\mathrm{min}}$ of generic points of $X$ has an isolated point $x$ as a subspace of $X^{\mathrm{con}}$. By \cite[Corollary 4.4.6]{book_spectralspaces}, $X^\mathrm{min}$ inherits the same topology from $X$ as it does from $X^\mathrm{con}$. As such $x$ is isolated in $X^\mathrm{min}$ as a subspace of $X$. That is, there exists an open $U \subseteq X$ such that $U \cap X^\mathrm{min} = \{x\}$. This implies $U \subseteq \overline{\{x\}}$ by generalization closure of open subsets, and the assumption that $x$ is a generic point.

    As $X$ is $T_D$, we have that $W \coloneq (X \setminus\overline{\{x\}}) \cup \{x\}$ is open by Lemma~\ref{lem:tdopens}. Then $U \cap W = \{x\}$ is isolated as required.
\end{proof}

\begin{prop}\label{prop:topology}
    Let $X$ be a $T_D$ coherent space. Then $X$ is scattered if and only if $X^{\mathrm{cons}}$ is.
\end{prop}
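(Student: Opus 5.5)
The forward direction is Lemma~\ref{lem:finer}: if $X$ is scattered then so is $X^{\mathrm{con}}$. This needs no $T_D$ hypothesis. All the work is in the converse, so from now on assume $X$ is $T_D$ and $X^{\mathrm{con}}$ is scattered. We must show that every non-empty subset of $X$ has an isolated point. By the lemma characterising scatteredness through closed subsets, it suffices to treat non-empty closed subsets $Z \subseteq X$.

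The plan is to reduce to Lemma~\ref{lem:gibisolated} applied to $Z$. For this we need three facts about $Z$ with the subspace topology.
\begin{enumerate}
\item $Z$ is a coherent space. A closed subset of a coherent space is coherent, being $\Spec$ of a quotient ring in the affine picture (cf.\ \cite{book_spectralspaces}).
\item $Z$ is $T_D$. Local closedness of points passes to subspaces: if $\{z\} = U \cap \overline{\{z\}}$ in $X$, then $\{z\} = (U \cap Z) \cap \overline{\{z\}}^{Z}$.
\item $Z^{\mathrm{con}}$ is scattered. Closed sets of $X$ are intersections of complements of quasi-compact opens, hence closed in $X^{\mathrm{con}}$. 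The constructible topology of $Z$ then agrees with the subspace topology that $Z$ inherits from $X^{\mathrm{con}}$ (standard; see \cite{book_spectralspaces}). Subspaces of scattered spaces are scattered, since the definition quantifies over all subsets.
\end{enumerate}

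Lemma~\ref{lem:gibisolated} then gives a point of $Z$ that is isolated in $Z$. Since $Z$ was an arbitrary non-empty closed subset, $X$ is scattered.

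The main obstacle is item (iii): identifying the constructible topology on a closed subspace with the subspace topology from $X^{\mathrm{con}}$. I would argue it directly. The quasi-compact opens of $Z$ are exactly the traces $U \cap Z$ of quasi-compact opens $U$ of $X$. This holds because $Z$ is closed, hence quasi-compact, and every open of $Z$ is a union of such traces. The constructible topologies on both sides are generated by these sets and their complements, so the two topologies coincide.
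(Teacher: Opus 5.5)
Your proof is correct and is essentially the paper's argument: reduce to a non-empty closed subset, note that it is coherent, $T_D$, and has scattered constructible topology (as a subspace of $X^{\mathrm{con}}$), then apply Lemma~\ref{lem:gibisolated}. You spell out details the paper leaves implicit. One small justification is misattributed: a trace $U\cap Z$ is quasi-compact because it is closed in the quasi-compact $U$, not because $Z$ is quasi-compact.
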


\begin{proof}
    The forward direction is Lemma~\ref{lem:finer}. For the converse, let $Y \subseteq X$ be a closed subset. Then $Y$ is again coherent and $T_D$, and $Y^{\mathrm{con}} \subseteq X^{\mathrm{con}}$ is scattered, so by Lemma~\ref{lem:gibisolated} $Y$ has an isolated point.
\end{proof}

\begin{thm}\label{thm:main2}
    Let $A$ be a commutative ring. Then $\mathsf{D}(A)$ satisfies the local-to-global principle if and only if $(\Spec A)^\vee$ is scattered.
\end{thm}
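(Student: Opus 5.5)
Both directions go through the dual space $X := (\Spec A)^\vee$, which is the Balmer spectrum $\Spc \sfD^\mathrm{perf}(A)$ and is again a coherent space. We use two facts: $X^\mathrm{con} = (\Spec A)^\mathrm{con}$, and the idempotents $\Gamma_x A$ are defined via finite (Thomason) localizations precisely when $X$ is $T_D$.

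\emph{Sufficiency.} Suppose $X$ is scattered. By the lemma that scattered spaces are $T_D$, every point of $X$ is locally closed. Hence the objects $\Gamma_x A$ exist for all $x$, built from Thomason subsets of $\Spec A$. We then invoke the general local-to-global theorem for rigidly-compactly generated tt-categories whose Balmer spectrum is noetherian-free but has Cantor-Bendixson rank (equivalently, is scattered). This is \cite{Stevensonltg}, which shows that if $\Spc \sfK^c$ is scattered/has Cantor-Bendixson rank then the local-to-global principle holds. We apply it with $\sfK = \sfD(A)$ and $\Spc \sfD^\mathrm{perf}(A) \cong X$. The point to check is only that the hypothesis there is phrased for the Balmer spectrum, i.e.\ for $X$ and not for $\Spec A$. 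That is exactly why the statement is about the Hochster dual.

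\emph{Necessity.} Suppose $\sfD(A)$ satisfies the local-to-global principle. Implicit in this is that the idempotents $\Gamma_x A$ exist, i.e.\ that $X$ is $T_D$. If the principle is only formulated when they exist, this is automatic. Otherwise we argue that failure of $T_D$ at some point $x$ means the support of every object misses $x$, so the principle cannot produce everything; this is the convention of Consequences~\ref{consequences}. So $X$ is $T_D$, and Corollary~\ref{cor:main} applies. Its contrapositive says $(\Spec A)^\mathrm{con}$ has Cantor-Bendixson rank, i.e.\ it is scattered.

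Since $X^\mathrm{con} = (\Spec A)^\mathrm{con}$, the space $X^\mathrm{con}$ is scattered. Now $X$ is a $T_D$ coherent space, so Proposition~\ref{prop:topology} gives that $X$ itself is scattered, as desired.

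The main obstacle is the sufficiency direction. It depends on correctly citing the theorem of \cite{Stevensonltg} that Cantor-Bendixson rank of the Balmer spectrum implies the local-to-global principle, and on matching the dual topology. If that result is only stated for spectra that are noetherian, or that have the rank in a specific topology, some care is needed to verify that the scattered hypothesis on $X$ is what it requires.
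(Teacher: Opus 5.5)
Your argument is the paper's proof. Necessity is identical: $T_D$ is built into the principle, then come Corollary~\ref{cor:main}, the equality $((\Spec A)^\vee)^{\mathrm{con}} = (\Spec A)^{\mathrm{con}}$, and Proposition~\ref{prop:topology}. Sufficiency is ``scattered implies $T_D$'' plus a general local-to-global theorem, which the paper takes from \cite[Theorem 7.18]{sanders_vanishing} (see also \cite[Theorem 4.8.9]{StevensonNotes}).

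One thing needs fixing. By Thomason's classification, $\Spc \sfD^{\mathrm{perf}}(A)$ is homeomorphic to $\Spec A$ itself, not to $(\Spec A)^\vee$. The Hochster dual appears for a different reason: the Balmer--Favi idempotents $\Gamma_x A$ are built from Thomason subsets, and these are exactly the open subsets of the dual. So the general theorem you need says: if the Hochster dual of the Balmer spectrum is scattered, the local-to-global principle holds.

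Your version, ``if $\Spc$ of the compacts is scattered then the principle holds,'' is false under the standard convention. For the ring of Section~\ref{sec:ex}, $\Spec A$ is scattered, yet the principle fails. Your two misidentifications cancel, so the hypothesis you end up checking is the right one. However, your explanation of why the dual appears is backwards and should be corrected.
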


\begin{proof}
    If $(\Spec A)^\vee$ is scattered then it is $T_D$ and the local-to-global principle for the derived category follows from \cite[Theorem 7.18]{sanders_vanishing} (see also \cite[Theorem 4.8.9]{StevensonNotes}).

    Conversely, suppose that $\mathsf{D}(A)$ satisfies the local-to-global principle, so in particular $(\Spec A)^\vee$ is $T_D$. Then by Corollary~\ref{cor:main} we know $(\mathrm{Spec} A)^\mathrm{con}$ must have Cantor-Bendixson rank; equivalently, it is scattered. By Proposition~\ref{prop:topology} $(\Spec A)^\vee$ is then scattered.
\end{proof}



\end{document}